 \newcommand{\eps}{{\varepsilon}}
 \def\1{\raisebox{2pt}{\rm{$\chi$}}}
\def\vint_#1{\mathchoice%
          {\mathop{\kern 0.2em\vrule width 0.6em height 0.69678ex depth -0.58065ex
                  \kern -0.8em \intop}\nolimits_{\kern -0.4em#1}}%
          {\mathop{\kern 0.1em\vrule width 0.5em height 0.69678ex depth -0.60387ex
                  \kern -0.6em \intop}\nolimits_{#1}}%
          {\mathop{\kern 0.1em\vrule width 0.5em height 0.69678ex
              depth -0.60387ex
                  \kern -0.6em \intop}\nolimits_{#1}}%
          {\mathop{\kern 0.1em\vrule width 0.5em height 0.69678ex depth -0.60387ex
                  \kern -0.6em \intop}\nolimits_{#1}}}
\def\vintslides_#1{\mathchoice%
          {\mathop{\kern 0.1em\vrule width 0.5em height 0.697ex depth -0.581ex
                  \kern -0.6em \intop}\nolimits_{\kern -0.4em#1}}%
          {\mathop{\kern 0.1em\vrule width 0.3em height 0.697ex depth -0.604ex
                  \kern -0.4em \intop}\nolimits_{#1}}%
          {\mathop{\kern 0.1em\vrule width 0.3em height 0.697ex depth -0.604ex
                  \kern -0.4em \intop}\nolimits_{#1}}%
          {\mathop{\kern 0.1em\vrule width 0.3em height 0.697ex depth -0.604ex
                  \kern -0.4em \intop}\nolimits_{#1}}}
\newcommand{\intav}{\vint}
\newcommand{\aveint}[2]{\mathchoice%
          {\mathop{\kern 0.2em\vrule width 0.6em height 0.69678ex depth -0.58065ex
                  \kern -0.8em \intop}\nolimits_{\kern -0.45em#1}^{#2}}%
          {\mathop{\kern 0.1em\vrule width 0.5em height 0.69678ex depth -0.60387ex
                  \kern -0.6em \intop}\nolimits_{#1}^{#2}}%
          {\mathop{\kern 0.1em\vrule width 0.5em height 0.69678ex depth -0.60387ex
                  \kern -0.6em \intop}\nolimits_{#1}^{#2}}%
          {\mathop{\kern 0.1em\vrule width 0.5em height 0.69678ex depth -0.60387ex
                  \kern -0.6em \intop}\nolimits_{#1}^{#2}}}
\theoremstyle{plain}
  \newtheorem{theorem}{Theorem}[section]
\newtheorem{lemma}{Lemma}[section]
 \newtheorem{corollary}{Corollary}[section]
 \newtheorem{propo}{Proposition}[section]
  \newtheorem{claim}{Claim}[section]
\theoremstyle{remark}
\newcommand{\xintloo}[1]{\int\limits_{#1} \kern-18pt\raise4pt\hbox to7pt {\hrulefill}\ }   \numberwithin{equation}{section}
\newcommand{\half}{{\frac{1}{2}}}
\newcommand{\oO}{\overline{\Omega}}
\newcommand{\dist}{\operatorname{dist}}
\newcommand{\D}{\mathcal{D}}
\newcommand{\M}{\mathcal{M}}
\newcommand{\To}{\mathcal{T}}
\DeclareMathOperator{\tr}{tr}
\begin{document}
 \title[Discrete Stochastic Interpretation of $\D_{p}$]{A Discrete Stochastic Interpretation of the Dominative $p$-Laplacian}
 \author[K. Brustad]{Karl K. Brustad}
 \address{Karl K. Brustad
 \hfill\break\indent
Department of Mathematics and System Analysis
\hfill\break\indent
Aalto University
\hfill\break\indent FI-00076, Aalto, Finland
\hfill\break\indent
{\tt karl.brustad@aalto.fi}}
\
 \author[P. Lindqvist]{Peter Lindqvist}
 \address{Peter Lindqvist
 \hfill\break\indent
Department of Mathematics
\hfill\break\indent
Norwegian University of Science and Technology
\hfill\break\indent NO-7491, Trondheim, Norway
\hfill\break\indent
{\tt peter.lindqvist@ntnu.no}}
 \author[J. Manfredi]{Juan J. Manfredi}
 \address{Juan J. Manfredi
 \hfill\break\indent
Department of Mathematics
\hfill\break\indent
University of Pittsburgh
\hfill\break\indent Pittsburgh, PA 15260, USA
\hfill\break\indent
{\tt manfredi@pitt.edu}}
 \numberwithin{equation}{section}
 \date{\today}
\maketitle
\setcounter{tocdepth}{1}
\tableofcontents

\section{Introduction}\label{introduction}
The interplay between Stochastic Game Theory and \emph{nonlinear} Partial Differential Equations has been of increasing importance, beginning with the pioneering work of Kohn and Serfaty
\cite{KS09, KS10} and Peres, Schramm, Sheffield and Wilson \cite{PS08, PSSW09}, involving discrete processes. We shall develop this connection for the \emph{Dominative p-Laplace Equation}, which is akin to the well-known normalized p-Laplace Equation. Thus, we shall present a discrete stochastic interpretation and prove uniform convergence of the discretizations.

The Dominative $p$-Laplacian is the operator defined for $2\le p < \infty$ as follows:
\begin{equation}\label{dominativep}
\D_p u(x):=\lambda_{1}+\cdots+\lambda_{N-1}+(p-1)\lambda_{N}, 
 \end{equation}
where we have  ordered the eigenvalues of the Hessian matrix $D^{2}u(x)$ as $\lambda_{1}\le \lambda_{2}\ldots\le\lambda_{N}$.
It was introduced by Brustad in \cite{B18} in order to give a  natural explanation of the superposition principle for the $p$-Laplace equation (see \cite{CZ03} and \cite{LM08}). This operator is interesting in its own right. The case $p=2$ reduces to the Laplacian $\Delta u(x)$.
\par
It is also of interest to consider the case $p=\infty$ with the following interpretation
\begin{equation}\label{dominativeinf}
\D_{\infty}u(x):=\lambda_{N}.
\end{equation}
$\D_\infty u = 0$ is the largest eigenvalue equation, or the equation for the concave envelope, which has been studied in \cite{O07} and \cite{OS11}. This equation is only \emph{degenerate} elliptic, but viscosity solutions with $C^{1, \alpha}$ boundary values are in the class $C^{1, \alpha}$ \cite{OS11}. \par

The operator $\D_{p}$ is sublinear, therefore convex,  and uniformly elliptic for $p<\infty$. Thus,  the viscosity solutions of the equation $\D_{p}u(x) = 0$ are locally in the class  $C^{2, \alpha}$. See Chapter 6 in \cite{CC95} for the regularity result and  \cite{B18-2} for the general theory of sublinear operators. 
\par

Consider the following problem.
Suppose we have a domain $\Omega$ and a function $F$ defined on an $\epsilon$-strip along the \emph{outside} of its boundary.  Start at a point $x_0$ in $\oO$. Now, you get to choose a \emph{direction} $\sigma = \sigma(x_0)\in\mathbb{S}^{N-1}$. Then, for a fixed $q\in(0,1]$, a new starting point $x_1\in\overline{B}_\eps(x_0)$ is picked according to the rule
\begin{equation}\label{rule}
\begin{cases}
&\text{with probability $q$, $x_{1}\in B_{\epsilon}(x_{0})$ is selected at random}\\
&\text{with probability $\frac{1-q}{2}$ we set $x_{1} := x_{0}+\epsilon\sigma$, and}\\
&\text{with probability $\frac{1-q}{2}$ we set $x_{1} := x_{0}-\epsilon\sigma$.}
\end{cases}
\end{equation}
Observe that the probabilities sum up to 1, as they should. Also note that $x_1$ does not depend on the sign of $\sigma$. Thus you are effectively only choosing a \emph{line} through the origin.
Repeat the process until you leave $\oO$ at, say, step $\tau$.
The value $F(x_\tau)$ is then defined and let us say you want it to be as large as possible. The challenge is therefore:
How to choose the directions $(\sigma(x_k))_{k=0}^{\tau-1}$ in order to maximize the expected value of $F(x_\tau)$?

We shall show that the maximized expected value $u_\eps(x) := \sup_\sigma\mathbb{E}(F(x_\tau))$, obtained from starting at $x\in\oO$, satisfies a \emph{non-linear mean value property}, or Dynamic Programming Principle,
\begin{equation*}
\displaystyle{u_{\epsilon}(x)=
	q \intav_{B_{\epsilon}(x)} u_{\epsilon}(y)\, dy+\!\!
	\left(1-q\right)\sup_{\vert\xi\vert = 1}
	\frac{u_{\epsilon}(x+\epsilon\xi) + u_{\epsilon}(x-\epsilon\xi)}{2}}, 
\end{equation*}
where we have used the notation 
$$ \intav_{B} f(y)\, dy= \frac{1}{|B |}\int_{B} f(y)\, dy$$
for the average of an integrable  function $f$ on a ball $B$.\par
Moreover, we will prove that $u_{\epsilon}\to u$ uniformly 
in $\overline{\Omega}$, where the limit function $u$ is the unique solution of the Dirichlet problem
\begin{equation}\label{introdirichlet}
\left\{
\begin{array}{rclc}
\D_{p}u(x) & = & 0 & \text{ for }x\in\Omega\\
u(x)& = & F(x) & \text{ for }x\in\partial\Omega\\
\end{array}
\right.
\end{equation}
for the Dominative $p$-Laplace Equation.
The relation between $p\in[2,\infty)$ and $q$ is 
\begin{equation}\label{pq}
q = \frac{N+2}{N+p},\qquad 1-q = \frac{p-2}{N+p}.
\end{equation}
We shall assume that $\Omega\subset\mathbb{R}^{N}$ is a bounded Lipschitz domain and that the prescribed boundary values
$F\colon\partial\Omega\mapsto\mathbb{R}$
are Lipschitz continuous.


\section{Statements of Results}  
Let $\Omega$ be a bounded Lipschitz domain in $\mathbb{R}^{N}$, $N\ge 2$.
For a fixed  $\eps>0$ we set
$$\Omega_{\epsilon} := \{x\in \mathbb{R}^{N}\colon d(x,\Omega)\le\epsilon
\}\text{ and  } \Gamma_{\epsilon} := \Omega_{\epsilon}\setminus \overline{\Omega}.$$


Note that, by our definition, $\Gamma_\eps$ does not contain the boundary $\partial\Omega$. Also, for $x\in \overline{\Omega}$, we  always have $\overline{B}_{\epsilon}(x)\subset \Omega_{\epsilon}$.
We may extend the given  bounded Lipschitz function
$F\colon\partial\Omega\mapsto\mathbb{R}$ to $\Gamma_{\epsilon}$, preserving the same Lipschitz constant.

Fix $p\in[2,\infty)$  and recall the relation \eqref{pq} with $q$. We define a \textit{non-linear} Mean Value Operator $\M^\eps$  from the set of bounded and integrable Borel functions in $\Omega_\eps$, to the set of bounded functions in $\overline{\Omega}$ 
as 
 $$\M^\eps v(x) :=
 q \fint_{B_{\epsilon}(x)}v(y)\, dy+
 (1-q)\sup_{|\xi|=1}
 \frac{v(x+\epsilon\xi)+ v(x-\epsilon\xi)}{2}.
 $$ 
For $\xi\in\mathbb{S}^{N-1}$ we shall also write
\[\M_\xi^\eps v(x) := 
q \fint_{B_{\epsilon}(x)}v(y)\, dy+
(1-q)
\frac{v(x+\epsilon\xi)+ v(x-\epsilon\xi)}{2}\]
and thus $\M^\eps v = \sup_{|\xi|=1}\M_\xi^\eps v$.
If $v_1,v_2$ are two such functions in $\Omega_\eps$ and $c$ is a non-negative constant, one may easily check that

\begin{enumerate}
\item $\M^\eps[cv_1](x) = c\M^\eps v_1(x)$,
\item $\M^\eps[v_1 + v_2](x) \leq \M^\eps v_1(x) + \M^\eps v_2(x)$,
\item $\M^\eps[v_1 - v_2](x) \geq \M^\eps v_1(x) - \M^\eps v_2(x)$,
\item $ \M^\eps v_1(x) \leq \M^\eps v_2(x)$ whenever $v_{1}\le v_{2}$ in $\overline{B}_{\epsilon}(x)$,
\end{enumerate}
for $x\in\oO$. Moreover, $\M^\eps[a+v_1] = a + \M^\eps v_1$ in $\overline{\Omega}$ for any affine function $a$ in $\Omega_\eps$, and $\M^\eps[\phi + v_1] = \M^\eps\phi + \M^\eps v_1 = \M^\eps_\xi\phi + \M^\eps v_1$ for any paraboloid $\phi(x) = \alpha\vert x-x_0\vert^2$ and $\xi\in\mathbb{S}^{N-1}$. 
Also, $\M^\eps$ is translational invariant, meaning that if $\theta_h(x) := x+h$, then $\M^\eps[v\circ\theta_h] = (\M^\eps v)\circ\theta_h$ in the proper domains.

Next, we show that
\begin{equation}\label{quadid}
C_{N,p}\frac{\M^\eps \phi(x) - \phi(x)}{\epsilon^{2}} = \D_p \phi,\qquad C_{N,p} := 2(N+p),
\end{equation}
for second order polynomials $\phi$.

Let $\phi(x) := c + b^Tx + \half x^TAx$ in $\mathbb{R}^N$, where $A$ is a symmetric $n\times n$ matrix. First, we verify \eqref{quadid} at $x=0$.
\begin{align*}
\M^\eps\phi(0) - \phi(0)
&= q\fint_{B_{\epsilon}(0)} \half y^TAy\, dy + \frac{1-q}{2}\eps^2\sup_{|\xi|=1}\xi^TA\xi\\
&= q\frac{\eps^2}{2(N+2)}\tr A + \frac{1-q}{2}\eps^2\lambda_{N}(A)\\
&= \eps^2\left(\frac{q}{2(N+2)}\Delta\phi + \frac{1-q}{2}\lambda_{N}(D^2\phi)\right)\\
&= \frac{\eps^2}{2(N+p)}\left(\Delta\phi + (p-2)\lambda_{N}(D^2\phi)\right)\\
&= \frac{\eps^2}{2(N+p)}\D_p\phi(0).
\end{align*}
Next, for $h\in\mathbb{R}^n$ we have $\phi(x+h) - \phi(h) = h^TAx + \phi(x)-\phi(0)$ and so $\M^\eps\phi(x+h) - \phi(h) = h^TAx + \M^\eps\phi(x) - \phi(0)$ by translational invariance and the property for affine functions. The identity \eqref{quadid} follows by setting $x=0$ and replacing $h$ with $x$.

In particular, by a Taylor expansion about $x\in\Omega$, \eqref{quadid} implies that
\begin{equation}\label{smoothexpansion}
C_{N,p}\frac{\M^\eps v(x) - v(x)}{\epsilon^{2}}=\D_p v(x) + O(\eps),\qquad \text{as $\eps\to 0$,}
\end{equation}
for every $v\in C^3(\oO)$.\normalcolor

Let  $\mathcal{A}$ denote the class of  Borel functions  $v\colon \Omega_{\epsilon}\to \mathbb{R}$ satisfying the conditions
\begin{enumerate}
	\item $v\in L^\infty(\Omega_{\epsilon})$,
	and 
	\item $v=F$ on $\Gamma_{\epsilon}$.
\end{enumerate} 

The next Lemma allows us to  circumvent  the question whether $\M^\eps v$ is measurable.
 
\begin{lemma}\label{lsc}If $v$ is bounded and lower semicontinuous (l.s.c.) in $\Omega_{\epsilon}$, then
$\M^\eps v$ is bounded and l.s.c. in $\overline{\Omega}$.
\end{lemma}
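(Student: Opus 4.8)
The plan is to prove the two claims separately, starting with boundedness, which is immediate from the hypotheses. Since $v$ is bounded in $\Omega_\eps$, say $m \le v \le M$, and since for $x \in \oO$ we have $\overline{B}_\eps(x) \subset \Omega_\eps$, every term in the definition of $\M^\eps v(x)$ — the average over $B_\eps(x)$ and each value $v(x \pm \eps\xi)$ — lies in $[m,M]$. As $q + (1-q) = 1$, the convex-combination structure gives $m \le \M_\xi^\eps v(x) \le M$ for every $\xi$, and taking the supremum over $\xi$ preserves the bound. Hence $\M^\eps v$ is bounded in $\oO$ with the same bounds.

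For lower semicontinuity, I would treat the two pieces of $\M^\eps v$ — the averaging part $A(x) := q\fint_{B_\eps(x)} v(y)\,dy$ and the supremum part $S(x) := (1-q)\sup_{|\xi|=1}\frac{v(x+\eps\xi)+v(x-\eps\xi)}{2}$ — and show each is l.s.c. in $\oO$; the sum of two l.s.c. functions is l.s.c. For the averaging part, I would write $A(x) = \frac{q}{|B_\eps|}\int_{\R^N} \1_{B_\eps(x)}(y)\, v(y)\,dy$ and apply Fatou's lemma along a sequence $x_n \to x$: since $v$ is bounded below (after subtracting the constant $m$ we may assume $v \ge 0$), Fatou gives $\liminf_n \int \1_{B_\eps(x_n)} v \ge \int \liminf_n (\1_{B_\eps(x_n)} v)$, and for a.e. $y$ one has $\liminf_n \1_{B_\eps(x_n)}(y) \ge \1_{B_\eps(x)}(y)$ because $|y - x| < \eps$ forces $|y - x_n| < \eps$ for large $n$ (the boundary $\partial B_\eps(x)$ is null, so the failure of convergence on the sphere is irrelevant). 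This yields $\liminf_n A(x_n) \ge A(x)$. Alternatively — and perhaps cleaner — one notes that $v$ l.s.c. and bounded below is an increasing pointwise limit of bounded Lipschitz functions $v_k$, for which $A_k$ is continuous, and then $A = \sup_k A_k$ is l.s.c. as a supremum of continuous functions (using monotone convergence to identify the limit).

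For the supremum part, the key observation is that a pointwise supremum of l.s.c. functions is l.s.c., so it suffices to show that for each fixed $\xi \in \mathbb{S}^{N-1}$ the map $x \mapsto v(x+\eps\xi) + v(x-\eps\xi)$ is l.s.c. on $\oO$. This is immediate: $x \mapsto v(x \pm \eps\xi)$ is the composition of the l.s.c. function $v$ with a translation (a homeomorphism), hence l.s.c.; and a finite sum of l.s.c. functions is l.s.c. One must check the domains match — for $x \in \oO$ the points $x \pm \eps\xi$ lie in $\overline{B}_\eps(x) \subset \Omega_\eps$, so $v$ is defined there. Thus $S$ is l.s.c. on $\oO$, and $\M^\eps v = A + S$ is l.s.c. on $\oO$ as claimed.

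The main obstacle, such as it is, is the measure-theoretic subtlety in the averaging term: one must handle the fact that $\1_{B_\eps(x_n)}$ need not converge to $\1_{B_\eps(x)}$ pointwise everywhere (failure on $\partial B_\eps(x)$), and that $v$ is only Borel and bounded, not continuous, so we cannot invoke continuity of the average directly. The remedy is either the Fatou argument combined with the fact that the sphere $\partial B_\eps(x)$ has Lebesgue measure zero, or the approximation of $v$ from below by Lipschitz functions; I would present whichever is shorter. Everything else — boundedness, stability of l.s.c. under translation, finite sums, and arbitrary suprema — is routine.
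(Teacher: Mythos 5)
Your proof is correct. It follows the same two underlying ideas as the paper's proof (Fatou's lemma for the averaging term, and ``a supremum of l.s.c.\ functions is l.s.c.''\ for the directional term), but the packaging differs in a way worth noting. The paper writes the \emph{entire} quantity $\M^\eps_\xi v(x)$ as a single integral $\int v(x+y)\,d\gamma_\xi(y)$ against the fixed probability measure $\gamma_\xi = q\,\frac{|B_\eps(0)\cap\,\cdot\,|}{|B_\eps(0)|} + \frac{1-q}{2}(\delta_\xi+\delta_{-\xi})$, so that only the integrand $y\mapsto v(x+y)$ varies with $x$; one application of Fatou (using $\liminf_{x\to x_0}v(x+y)\ge v(x_0+y)$ pointwise in $y$) then handles the absolutely continuous part and the two atoms simultaneously, and the supremum over $\xi$ finishes the argument. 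You instead split $\M^\eps v$ into the average $A$ and the supremum term $S$, and for $A$ you keep $v$ fixed while moving the indicator $\1_{B_\eps(x_n)}$; this is valid but obliges you to observe that $\partial B_\eps(x)$ is Lebesgue-null, a technicality the paper's fixed-measure formulation avoids entirely. Your treatment of $S$ (translation preserves lower semicontinuity, finite sums and arbitrary suprema of l.s.c.\ functions are l.s.c.) is the same as what the paper's argument reduces to on the atomic part. Both routes are complete; the paper's is marginally more economical, while yours makes the continuity properties of the two pieces individually explicit (indeed $A$ is even continuous).
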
 
\vskip -.15in
(See \S \ref{proofof21} below  for the proof.)\par

Recall that for a bounded function $v$ the lower semi-continuous envelope $v_{*}$  is given by
$$ v_{*}(x) := \sup \{ \phi(x) \colon \phi \le v \text{  and  } \phi \text{ is l.s.c.}\}.$$
Define the iteration operator $\To^\eps:\mathcal{A}\mapsto\mathcal{A}$ as follows

\begin{equation}\label{operatorTsigma}
\left\{
\begin{array}{cccl}
\text{for } x\in \overline{\Omega}, & \To^\eps v(x) & := &\M^\eps [v_{*}](x) \\
\text{for } x\in \Gamma_{\epsilon},  &  \To^\eps v(x) &= & F(x).
\end{array}
\right.
\end{equation}

Its fixed point is of interest.
\begin{lemma}\label{existenceanduqineness}
There exists a unique  function $v_{\epsilon}\in \mathcal{A}$ such that  $\To^\eps v_{\epsilon}(x)=v_{\epsilon}(x)$ for all $x\in \Omega_\eps
$.
Moreover,  the function $v_\eps$ is l.s.c. in $\overline{\Omega}$.
\end{lemma}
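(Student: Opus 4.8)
The plan is to realize $v_\eps$ as the limit of a monotone iteration and to establish uniqueness via a comparison argument based on the structural properties (1)--(4) of $\M^\eps$. First I would fix the two extreme ``barriers.'' Let $M := \sup_{\Gamma_\eps} F$ and $m := \inf_{\Gamma_\eps} F$, and set $\overline v_0 := M$, $\underline v_0 := m$ on all of $\Omega_\eps$ (these are in $\mathcal A$ after replacing their values on $\Gamma_\eps$ by $F$; more carefully, take $\overline v_0$ to be $F$ on $\Gamma_\eps$ and $M$ on $\oO$, similarly for $\underline v_0$). Because $\M^\eps$ applied to a constant returns that constant (from $\M^\eps[a+v]=a+\M^\eps v$ and $\M^\eps 0 = 0$), one checks $\To^\eps \overline v_0 \le \overline v_0$ and $\To^\eps\underline v_0 \ge \underline v_0$ pointwise in $\Omega_\eps$. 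Then I would define $\overline v_{k+1} := \To^\eps\overline v_k$ and $\underline v_{k+1} := \To^\eps\underline v_k$. Using monotonicity of $\To^\eps$ — which follows from property (4) of $\M^\eps$, together with the elementary fact that $v\mapsto v_*$ is monotone and that $v_* \le v$ — one gets, by induction, that $(\overline v_k)$ is non-increasing, $(\underline v_k)$ is non-decreasing, and $\underline v_k \le \overline v_k$ for every $k$ (the last because $\underline v_0 \le \overline v_0$ everywhere in $\Omega_\eps$, using $F$ is squeezed between $m$ and $M$). Hence the pointwise limits $\overline v := \lim_k \overline v_k$ and $\underline v := \lim_k \underline v_k$ exist and lie in $L^\infty(\Omega_\eps)$ with $\underline v \le \overline v$.

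Next I would show these limits are fixed points and are l.s.c. on $\oO$. For the l.s.c.\ claim: starting from $\underline v_0$ which is l.s.c.\ in $\Omega_\eps$ (it is constant on $\oO$ and equals $F$ on the relatively open set $\Gamma_\eps$ where $F$ is continuous — one should check semicontinuity across $\partial\Omega$, which holds since $m \le F$), Lemma~\ref{lsc} gives that $\M^\eps[\underline v_0]$ is l.s.c.\ on $\oO$; gluing with the continuous boundary data on $\Gamma_\eps$ and again checking the junction, $\underline v_1 = \To^\eps\underline v_0$ is l.s.c.\ on $\Omega_\eps$, so $(\underline v_1)_* = \underline v_1$ and the iteration stays within l.s.c.\ functions. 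An increasing limit of l.s.c.\ functions is l.s.c., so $\underline v$ is l.s.c.\ on $\oO$ (and on $\Omega_\eps$). To pass to the limit in $\underline v_{k+1} = \M^\eps[(\underline v_k)_*] = \M^\eps[\underline v_k]$, I would use that $\M^\eps$ of an increasing sequence converges to $\M^\eps$ of the limit: the average term converges by monotone convergence, and for the sup term, $\sup_\xi$ of an increasing limit equals the increasing limit of $\sup_\xi$ (this interchange of two suprema/monotone limits is valid). Since $\underline v$ is l.s.c., $\underline v_* = \underline v$, and we conclude $\To^\eps\underline v = \underline v$. The same argument (using instead that a decreasing limit of l.s.c.\ functions need not be l.s.c., so one works with $(\overline v)_*$) shows $\To^\eps[(\overline v)_*] = \overline v$ — but in fact for uniqueness it suffices to handle $\underline v$.

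The crux is uniqueness, and here is where I expect the main work. Suppose $v, w \in \mathcal A$ both satisfy $\To^\eps v = v$, $\To^\eps w = w$. The goal is $v = w$ in $\Omega_\eps$; since they agree with $F$ on $\Gamma_\eps$, it suffices to prove $v \le w$ on $\oO$ (then swap roles). I would consider $\delta := \sup_{\oO}(v_* - w_*)$ — or work with appropriate semicontinuous envelopes so that the sup is attained, perhaps after approximating $\Omega_\eps$; since $v$ is l.s.c.\ on $\oO$ by the part already proved and $w_*$ is u.s.c., $v_* - w_*$ is l.s.c.\ minus nothing... more precisely one uses that $v - w_*$ attains its max on the compact set $\oO$ if it is u.s.c., which requires care. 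Assume toward a contradiction $\delta > 0$, attained at some $x_0 \in \oO$ (note $v = w = F$ on $\Gamma_\eps$ forces the max to be interior in the sense $x_0\in\oO$, and if $x_0\in\partial\Omega$ one still gets a contradiction since near $\partial\Omega$ the ball $B_\eps(x_0)$ reaches into $\Gamma_\eps$ where the gap is zero). At $x_0$, using property (3), $\delta = v(x_0) - w_*(x_0) = \To^\eps v(x_0) - \To^\eps w(x_0) = \M^\eps[v_*](x_0) - \M^\eps[w_*](x_0) \le \M^\eps[v_* - w_*](x_0)$. Now $\M^\eps[v_* - w_*](x_0) = q\fint_{B_\eps(x_0)}(v_* - w_*)\,dy + (1-q)\sup_{|\xi|=1}\frac{(v_*-w_*)(x_0+\eps\xi)+(v_*-w_*)(x_0-\eps\xi)}{2} \le \delta$, with equality forcing $v_* - w_* \equiv \delta$ a.e.\ on $B_\eps(x_0)$ (from the average, since $q > 0$ as $p < \infty$). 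Propagating this equality through overlapping balls by a connectedness/chaining argument, and using that $B_\eps(x)$ meets $\Gamma_\eps$ for $x$ near $\partial\Omega$ where the gap is $0$, yields $\delta = 0$, the contradiction. The delicate points are: making the max attained (semicontinuity bookkeeping of $v_*$ versus $w_*$, possibly handled by noting both fixed points are l.s.c.\ so $v_* = v$, $w_* = w$, and then one maximizes the l.s.c.\ function $v - w$... which is not u.s.c. — so instead maximize $w - v$ if proving $w\le v$, or symmetrize), and the chaining argument that the a.e.\ equality on one ball spreads to a neighborhood of $\partial\Omega$. Once uniqueness holds, $\underline v = \overline v =: v_\eps$ is the unique fixed point, it is l.s.c.\ on $\oO$, and the proof is complete.
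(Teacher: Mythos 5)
Your overall strategy coincides with the paper's: existence via the monotone iteration $v_{i}=\To^\eps v_{i-1}$ starting from the constant barrier $m_F=\inf_{\Gamma_\eps}F$ (the increasing limit of l.s.c.\ functions is l.s.c., so the envelope $(\cdot)_*$ disappears in the limit and the fixed-point equation passes to the limit), and uniqueness via the dichotomy ``either the supremum of the difference is zero, or the average term forces the difference to be constant a.e.\ on a ball, which then propagates by connectedness to the boundary where the difference vanishes.'' Your identification of $q>0$ as the reason the averaged term pins down the equality a.e.\ is exactly the paper's mechanism.

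However, there is a genuine unresolved gap in your uniqueness step, and you flag it yourself without closing it: you need the supremum $\delta=\sup_{\oO}(v-w)$ to be attained at some $x_0$, but $v-w$ is the difference of two l.s.c.\ functions and is neither u.s.c.\ nor l.s.c., so Weierstrass does not apply; symmetrizing (looking at $w-v$ instead) does not help, since that difference has the same defect. The paper avoids attainment entirely: it takes a maximizing sequence $x_n$ with $u(x_n)-w(x_n)\to M:=\sup_{\Omega_\eps}(u-w)$, uses the fixed-point equation and sublinearity to get
\begin{equation*}
u(x_n)-w(x_n)\;\le\; q\vint_{B_\eps(x_n)}\bigl(u(y)-w(y)\bigr)\,dy+(1-q)M,
\end{equation*}
and then passes to the limit $x_n\to x_0$ using only that $u-w$ is bounded and integrable together with the \emph{continuity of the ball average in the center} $x\mapsto\vint_{B_\eps(x)}(u-w)$. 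This yields $M\le\vint_{B_\eps(x_0)}(u-w)$ and hence $u-w=M$ a.e.\ on $B_\eps(x_0)$ without ever needing the sup to be attained; the rest is the open-and-closed set argument you sketch. If you replace your ``attained maximum'' step by this maximizing-sequence argument, your proof matches the paper's.
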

\vskip -.15in
(See \S \ref{proofof22} below  for the proof.)\par
We keep the subindex $\epsilon$ to emphasize the dependence on the step-size.
We call  $v_{\epsilon}$ the \textit{$\epsilon$-mean value solution}. 
\par

Given a fixed Borel measurable control  $\sigma\colon\oO\mapsto\mathbb{S}^{N-1}$  and a stepsize $\eps>0$, we define a discrete random process according to the rule \eqref{rule}.

More precisely,  fix $x_{0}\in \Omega_\eps$ and let
$$X^{\infty, x_{0}} := \{ \omega=(x_{0},x_{1}, x_{2},\ldots)\colon x_{n}\in \Omega_\eps\}$$
be the space of possible outcomes.
Set $\mathcal{F}^{x_{0}}_0$ to be the trivial sigma-algebra $\{X^{\infty,x_0},\emptyset\}$, and for $n\ge1$ let $\mathcal{F}^{x_{0}}_{n}$ be the sigma-algebra generated by the cylinders
\[
\begin{array}{rl}
A_{1}\times A_{2}\times\cdots &\times A_{n}\times \Omega_\eps \times \Omega_\eps\cdots\\
& = \{\omega\in X^{\infty, x_{0}}\colon x_{i}\in A_{i}, i=1,\ldots, n\}\\
&=  A_{1}\times A_{2}\times\cdots \times A_{n} \quad  \textrm{(abuse of notation)}, 
\end{array}
\]
 where the $A_{i}\subset \Omega_\eps$ are Borel sets. \par

Clearly we have 
$\mathcal{F}^{x_{0}}_{n}\subset \mathcal{F}^{x_{0}}_{n+1}$ so that  $\left\{ \mathcal{F}^{x_{0}}_{n} \right\}_{n\ge 1}$ is a filtration of the sigma-algebra $\mathcal{F}^{x_{0}}$ generated by
$$\bigcup_{{n\ge 1}}\mathcal{F}^{x_{0}}_{n}.$$
The coordinate functions $\textbf{x}_{n}(\omega)=x_{n}$ are $\mathcal{F}^{x_{0}}_{n}$ and $\mathcal{F}^{x_{0}}$ measurable.\par
Let $\tau_{\sigma}\colon X^{\infty, x_{0}}\to \mathbb{N}\cup \{\infty\}$ be the random variable
$$\tau_{\sigma}(\omega)=\min\{n\in\mathbb{N}\colon x_{n}\in \Gamma_{\epsilon}\},$$
where we follow the convention $\min\emptyset=\infty$.  We say that $\tau_{\sigma}$ is a \textsf{stopping time} with respect to the filtration $\left\{ \mathcal{F}^{x_{0}}_{n} \right\}_{n\ge 1}$.\par
For  $x\in \Omega_\eps$ define the \emph{transition probability measures} $\gamma[x]$ as
\[\gamma[x](A):=
\begin{cases}
\delta_{x}(A), &\text{if $x\in\Gamma_{\epsilon}$,}\\
q\,
\dfrac{|B_{\epsilon}(x)\cap A|}{|B_{\epsilon}(x)|} + \dfrac{1-q}{2}(\delta_{x+\epsilon \sigma(x)}(A)+\delta_{x-\epsilon \sigma(x)}(A)), &\text{if $x\in\oO$.}
\end{cases}\]

We see that the mapping $x\to\gamma[x](A)$ is Borel measurable for a fixed  Borel set $A\subset X$. Indeed, the first term is, in fact, continuous and the second one is easily seen to be Borel measurable, since $x\to\sigma(x)$ is so.

\par
For $n\ge 1$ define the probability measures $\mathbb{P}^{n,x_{0}}_{\sigma}$ on the measurable space
$(X^{\infty, x_{0}}, \mathcal{F}^{x_{0}}_{n})$ as follows:

$$\mathbb{P}^{1,x_{0}}_{\sigma}(A_{1}) := \gamma[x_0](A_1) = \int_{A_{1}} 1 \, d\gamma[x_{0}] (y_{1}),
$$
(Note that $x_{0}$ is fixed and the integration variable $y_{1}\in A_{1}$. )
$$\mathbb{P}^{2,x_{0}}_{\sigma}(A_{1}\times A_{2}) := \int_{A_{1}}\left(\int_{A_{2}} 1\, d\gamma[y_{1}](y_{2})
\right)d\gamma[x_{0}](y_{1})
$$
In the general case we get
\begin{align*}&\mathbb{P}^{n,x_{0}}_{\sigma}(A_{1}\times\cdots\times A_{n})\\
  &:= \int_{A_{1}}\!\!\left(\int_{A_{2}}\!\!\left(
\cdots\int_{A_{n}} 1\, d\gamma[y_{n-1}](y_{n})
\right) \cdots d\gamma[y_{1}](y_{2})\right)\! d\gamma[x_{0}](y_{1}).
\end{align*}

The family of probabilities $\left\{  \mathbb{P}^{n,x_{0}}_{\sigma}  \right\}_{n\ge1}$  is consistent in the sense of Kolmogorov. Thus the limit probability
$$ \mathbb{P}^{x_{0}}_{\sigma} := \lim_{n\to\infty}\mathbb{P}^{n,x_{0}}_{\sigma}   $$  exists and we have
$$\mathbb{P}^{n,x_{0}}_{\sigma}(A_{1}\times\cdots\times A_{n})=\mathbb{P}^{x_{0}}_{\sigma}(A_{1}\times\cdots\times A_{n})$$ for all cylinders $A_{1}\times\cdots\times A_{n}$.

 The following lemma tells us that the conditional expectation of the process at step $n$ relative to its past history, reflected in the sigma-algebra $\mathcal{F}^{x_{0}}_{n-1}$,  is precisely the integral of $v$ with respect to the transition probability from step $n-1$ to $n$. 
\begin{lemma}\label{key}
Let $v\colon \Omega_\eps\mapsto \mathbb{R}$ be  a bounded measurable  function. Then, whenever $x_{n-1}\in\oO$, we have
\[\mathbb{E}^{x_{0}}_{\sigma}\left[ v(x_{n})\,  | \,  \mathcal{F}^{x_{0}}_{n-1} \right] \!\!(x_{n-1})  =   \M^\eps_{\sigma(x_{n-1})} v(x_{n-1}),\]
and thus
\[\sup_\sigma\mathbb{E}^{x_{0}}_{\sigma}\left[ v(x_{n})\,  | \,  \mathcal{F}^{x_{0}}_{n-1} \right] \!\!(x_{n-1})  =   \M^\eps v(x_{n-1}).\]
\end{lemma}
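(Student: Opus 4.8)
The plan is to compute the conditional expectation directly from the construction of the measures $\mathbb{P}^{n,x_0}_\sigma$. First I would recall the defining property of conditional expectation: $\mathbb{E}^{x_0}_\sigma[v(\mathbf{x}_n)\mid\mathcal{F}^{x_0}_{n-1}]$ is the (a.e. unique) $\mathcal{F}^{x_0}_{n-1}$-measurable function $g$ such that $\int_C g\,d\mathbb{P}^{x_0}_\sigma = \int_C v(\mathbf{x}_n)\,d\mathbb{P}^{x_0}_\sigma$ for every $C\in\mathcal{F}^{x_0}_{n-1}$. Since $\mathcal{F}^{x_0}_{n-1}$ is generated by the cylinders $A_1\times\cdots\times A_{n-1}$, it suffices to verify this identity on such cylinders, and it is enough to show that the function $x_{n-1}\mapsto \M^\eps_{\sigma(x_{n-1})}v(x_{n-1})$, which is Borel measurable (since $\sigma$ and the averaging term are), does the job.

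The key computation is to write out both sides using the iterated-integral formula for $\mathbb{P}^{n,x_0}_\sigma$. On the one hand,
\[
\int_{A_1\times\cdots\times A_{n-1}} v(\mathbf{x}_n)\,d\mathbb{P}^{x_0}_\sigma
= \int_{A_1}\!\cdots\!\int_{A_{n-1}}\!\left(\int_{\Omega_\eps} v(y_n)\,d\gamma[y_{n-1}](y_n)\right)d\gamma[y_{n-2}](y_{n-1})\cdots d\gamma[x_0](y_1),
\]
using $A_n = \Omega_\eps$ in the cylinder (this is where measurability of $y_{n-1}\mapsto\int v\,d\gamma[y_{n-1}]$ enters, so that the iterated integral makes sense — this follows as in the remark on measurability of $x\mapsto\gamma[x](A)$, approximating $v$ by simple functions). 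On the other hand,
\[
\int_{A_1\times\cdots\times A_{n-1}} \M^\eps_{\sigma(\mathbf{x}_{n-1})}v(\mathbf{x}_{n-1})\,d\mathbb{P}^{x_0}_\sigma
= \int_{A_1}\!\cdots\!\int_{A_{n-1}} \M^\eps_{\sigma(y_{n-1})}v(y_{n-1})\,d\gamma[y_{n-2}](y_{n-1})\cdots d\gamma[x_0](y_1).
\]
So everything reduces to the pointwise identity, for $y_{n-1} = x_{n-1}\in\oO$,
\[
\int_{\Omega_\eps} v(y_n)\,d\gamma[x_{n-1}](y_n)
= q\fint_{B_\eps(x_{n-1})} v(y)\,dy + \frac{1-q}{2}\bigl(v(x_{n-1}+\eps\sigma(x_{n-1})) + v(x_{n-1}-\eps\sigma(x_{n-1}))\bigr)
= \M^\eps_{\sigma(x_{n-1})}v(x_{n-1}),
\]
which is immediate from the definition of the transition measure $\gamma[x_{n-1}]$ as the sum of a normalized Lebesgue piece on $B_\eps(x_{n-1})$ and two Dirac masses. (Note that $\overline{B}_\eps(x_{n-1})\subset\Omega_\eps$, so the integral is well defined, and on the complement of $\oO$ one would just get $v(x_{n-1})$, but the statement is restricted to $x_{n-1}\in\oO$.)

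The last line of the lemma follows by taking the supremum over Borel measurable controls $\sigma$: fixing $x_{n-1}\in\oO$, the right-hand side $\M^\eps_{\sigma(x_{n-1})}v(x_{n-1})$ depends on $\sigma$ only through the value $\sigma(x_{n-1})\in\mathbb{S}^{N-1}$, which can be any unit vector, so $\sup_\sigma \M^\eps_{\sigma(x_{n-1})}v(x_{n-1}) = \sup_{|\xi|=1}\M^\eps_\xi v(x_{n-1}) = \M^\eps v(x_{n-1})$ by definition of $\M^\eps$. The only genuinely delicate point I anticipate is the bookkeeping with conditional expectations on the infinite product space — namely justifying that it suffices to test against generating cylinders and that the iterated integral representation of $\mathbb{P}^{x_0}_\sigma$ on $\mathcal{F}^{x_0}_{n-1}$-sets can be used with an extra free coordinate $y_n$ integrated against $\gamma[y_{n-1}]$ — together with the attendant measurability of the inner integral. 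Once that Fubini-type structure is in place, the identity itself is a one-line consequence of the form of $\gamma[x]$.
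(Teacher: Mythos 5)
Your proposal is correct and follows essentially the same route as the paper: test the conditional expectation against generating cylinders, use the iterated-integral formula for $\mathbb{P}^{n,x_0}_\sigma$, and identify the innermost integral $\int v\,d\gamma[y_{n-1}]$ with $\M^\eps_{\sigma(y_{n-1})}v(y_{n-1})$ from the definition of the transition measure. The only cosmetic difference is that the paper first carries this out for $v=\chi_B$ and then extends by simple functions and monotone convergence, whereas you work with general bounded measurable $v$ directly and invoke the same approximation only for the measurability of the inner integral.
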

\vskip -.15in

(See \S \ref{proofof23} below  for the proof.)
\begin{lemma}\label{martingale1}
For any fixed $y_{0}\in \mathbb{R}^{N}$ and every control $\sigma$ the sequence of random variables $$\{ |x_{n\wedge\tau_{\sigma}}-y_{0}|^{2}-c_{N,p}(n\wedge \tau_{\sigma}) \epsilon^{2}\}_{n\ge 1}$$ is a martingale with respect to the natural filtration
$\left\{\mathcal{F}^{x_{0}}_{n}\right\}_{n\ge 1}.$
Here,  $c_{N,p} := \frac{N+p-2}{N+p}$. 
\end{lemma}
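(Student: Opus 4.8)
The plan is to verify the one-step identity $\mathbb{E}^{x_0}_\sigma[M_n\mid\mathcal{F}^{x_0}_{n-1}]=M_{n-1}$ for the random variables $M_n:=|x_{n\wedge\tau_\sigma}-y_0|^2-c_{N,p}(n\wedge\tau_\sigma)\epsilon^2$. First I would dispose of the structural requirements: because $\Omega_\eps$ is bounded and $n\wedge\tau_\sigma\le n$, each $M_n$ is bounded, hence in $L^1(\mathbb{P}^{x_0}_\sigma)$; and since $\tau_\sigma$ is a stopping time and the coordinate maps $\mathbf{x}_k$ are $\mathcal{F}^{x_0}_k$-measurable, $x_{n\wedge\tau_\sigma}$ — and therefore $M_n$ — is $\mathcal{F}^{x_0}_n$-measurable. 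So $(M_n)_{n\ge1}$ is adapted and integrable, and only the martingale identity is at stake.

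To prove it, I would partition $X^{\infty,x_0}$ into the $\mathcal{F}^{x_0}_{n-1}$-measurable events $\{\tau_\sigma\le n-1\}$ and $\{\tau_\sigma\ge n\}$, so that the corresponding indicators factor out of $\mathbb{E}[\,\cdot\mid\mathcal{F}^{x_0}_{n-1}]$. On $\{\tau_\sigma\le n-1\}$ the chain has already been absorbed, since $\gamma[x]=\delta_x$ for $x\in\Gamma_\epsilon$; thus $x_n=x_{n-1}=x_{\tau_\sigma}$ and $n\wedge\tau_\sigma=(n-1)\wedge\tau_\sigma=\tau_\sigma$, whence $M_n=M_{n-1}$ pointwise there. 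On $\{\tau_\sigma\ge n\}$ one has $x_{n-1}\in\overline{\Omega}$, so Lemma \ref{key} applies to the bounded measurable function $v(x):=|x-y_0|^2$ and gives
\[\mathbb{E}^{x_0}_\sigma\!\left[\,|x_n-y_0|^2\mid\mathcal{F}^{x_0}_{n-1}\right]\!(x_{n-1})=\M^\eps_{\sigma(x_{n-1})}v(x_{n-1}).\]

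Now I would evaluate the right-hand side with the quadratic identity \eqref{quadid}. The function $v(x)=|x-y_0|^2$ is a paraboloid $\alpha|x-x_0|^2$ (with $\alpha=1$, $x_0=y_0$), so the averaged-point term in $\M^\eps_\xi v$ is $\xi$-independent and $\M^\eps_{\sigma(x_{n-1})}v=\M^\eps v$. Since $D^2v=2I$, every eigenvalue of the Hessian equals $2$, so $\D_p v=2(N-1)+2(p-1)=2(N+p-2)$, and \eqref{quadid} yields
\[\M^\eps v(x_{n-1})-v(x_{n-1})=\frac{\eps^2}{C_{N,p}}\,\D_p v=\frac{\eps^2(N+p-2)}{N+p}=c_{N,p}\eps^2.\]
Hence on $\{\tau_\sigma\ge n\}$ we get $\mathbb{E}^{x_0}_\sigma[|x_n-y_0|^2\mid\mathcal{F}^{x_0}_{n-1}]=|x_{n-1}-y_0|^2+c_{N,p}\eps^2$; subtracting $c_{N,p}n\eps^2$ (the deterministic part of $M_n$, as $n\wedge\tau_\sigma=n$ on this event) gives exactly $M_{n-1}$. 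Combining the two cases, $\mathbb{E}^{x_0}_\sigma[M_n\mid\mathcal{F}^{x_0}_{n-1}]=M_{n-1}$, so $(M_n)_{n\ge1}$ is a martingale.

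The only delicate points are bookkeeping: confirming that $\{\tau_\sigma\ge n\}$ lies in $\mathcal{F}^{x_0}_{n-1}$ and that on it $x_{n-1}\in\overline{\Omega}$ (so Lemma \ref{key} is legitimately invoked), and that the absorbing form of $\gamma[\cdot]$ on $\Gamma_\epsilon$ genuinely freezes the path on $\{\tau_\sigma\le n-1\}$. None of these is a real obstacle; the substance is entirely carried by Lemma \ref{key} and the identity \eqref{quadid}, so the proof should be short.
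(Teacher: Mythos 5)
Your proof is correct and rests on the same two ingredients as the paper's: Lemma \ref{key} applied to the paraboloid $v(x)=|x-y_0|^2$ together with the quadratic identity \eqref{quadid}, which gives the one-step increment $c_{N,p}\eps^2$ whenever $x_{n-1}\in\overline{\Omega}$. Your explicit split into the $\mathcal{F}^{x_0}_{n-1}$-measurable events $\{\tau_\sigma\ge n\}$ and $\{\tau_\sigma\le n-1\}$ is a detail the paper's proof leaves implicit, and it is genuinely needed: after absorption in $\Gamma_\eps$ the unstopped process $|x_n-y_0|^2-nc_{N,p}\eps^2$ is only a supermartingale, so the stopping in the statement is what makes the martingale identity hold, exactly as you verify.
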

 \vskip -.15in
(See \S \ref{proofof24} below  for the proof.)\par
Applying Doob's optional stopping to the  finite stopping times $\tau_{\sigma}\wedge n := \min\{\tau_\sigma,n\}$  and letting 
$n\to\infty$, we have  
\begin{equation}\label{taubound}
\epsilon^{2} \mathbb{E}_{
\sigma}^{x_{0}}[\tau_{\sigma}]\le  C(N,\Omega)
\end{equation} and the
 process ends almost surely: 
 \begin{equation}\label{processends}
 \mathbb{P}^{x_{0}}_{\sigma}( \{\tau_{\sigma} <\infty\})=1.
 \end{equation}

Therefore, when we run the process we will hit  $\Gamma_{\epsilon}$ almost surely. Thus, the random variable
$F(x_{\tau_{\sigma}})$ is well defined. Averaging over all possible runs we get the expected value 
\begin{equation}\label{expectation}
u^{\sigma}_{\epsilon}(x_{0}) := \mathbb{E}^{x_{0}}_{\sigma}[F(x_{\tau_{\sigma}})].
\end{equation}

Optimizing over all strategies we get 
\begin{equation}
\label{udef}u_{\epsilon}(x_{0}) := \sup_{\sigma}\left(u^{\sigma}_{\epsilon}(x_{0})
\right)=\sup_{\sigma}\left( \mathbb{E}^{x_{0}}_{\sigma}[F(x_{\tau_{\sigma}})]
\right),
\end{equation}
which we call the  \textit{$\epsilon$-stochastic solution}.

Recall the $\epsilon$-mean value solution $v_\eps$ defined in Lemma \ref{existenceanduqineness}.

\begin{theorem}\label{meanvalue=stochastic}
We have
\[u_\eps = v_\eps\qquad\text{in $\Omega_\eps$.}\]
That is, the $\eps$-stochastic solution $u_{\epsilon}$ also satisfies the dynamic programming principle  $u_{\epsilon}(x_{0})= \M^\eps u_{\epsilon}(x_{0})$.
\end{theorem}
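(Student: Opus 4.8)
The plan is to establish the two inequalities $u_\eps \le v_\eps$ and $u_\eps \ge v_\eps$ separately, using the martingale/supermartingale machinery attached to the process together with Doob's optional stopping theorem, and the fact (Lemma~\ref{existenceanduqineness}) that $v_\eps$ is the unique l.s.c. fixed point of $\To^\eps$. Since $v_\eps$ is l.s.c., we have $(v_\eps)_* = v_\eps$, so the fixed-point relation reads $v_\eps(x) = \M^\eps v_\eps(x)$ for $x\in\oO$ and $v_\eps = F$ on $\Gamma_\eps$. Fix a starting point $x_0\in\Omega_\eps$ (the case $x_0\in\Gamma_\eps$ being trivial since then $\tau_\sigma = $\,"already stopped" and both sides equal $F(x_0)$).

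\emph{Step 1: $u_\eps \le v_\eps$.} Fix any control $\sigma$. Consider the process $Z_n := v_\eps(x_{n\wedge\tau_\sigma})$. Using Lemma~\ref{key} together with the fixed-point identity and the trivial bound $\M^\eps_{\sigma(x_{n-1})} v_\eps(x_{n-1}) \le \M^\eps v_\eps(x_{n-1}) = v_\eps(x_{n-1})$, one checks that $\{Z_n\}$ is a supermartingale with respect to $\{\mathcal{F}^{x_0}_n\}$: on $\{n\le\tau_\sigma\}$ the conditional expectation of $v_\eps(x_{n\wedge\tau_\sigma})$ given $\mathcal F_{n-1}^{x_0}$ is $\M^\eps_{\sigma(x_{n-1})}v_\eps(x_{n-1})\le v_\eps(x_{n-1})$, while on $\{n>\tau_\sigma\}$ the process is already frozen. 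Here we use boundedness of $v_\eps$ (Lemma~\ref{existenceanduqineness} gives $v_\eps\in\mathcal A\subset L^\infty$) so all expectations are finite. Since $\tau_\sigma<\infty$ a.s. by \eqref{processends} and $Z_n$ is uniformly bounded, dominated convergence gives $\mathbb E^{x_0}_\sigma[v_\eps(x_{\tau_\sigma})] = \lim_n \mathbb E^{x_0}_\sigma[Z_n] \le \mathbb E^{x_0}_\sigma[Z_0] = v_\eps(x_0)$. But $x_{\tau_\sigma}\in\Gamma_\eps$ a.s., where $v_\eps = F$, so $u^\sigma_\eps(x_0) = \mathbb E^{x_0}_\sigma[F(x_{\tau_\sigma})] \le v_\eps(x_0)$. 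Taking the supremum over $\sigma$ yields $u_\eps(x_0)\le v_\eps(x_0)$.

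\emph{Step 2: $u_\eps \ge v_\eps$.} This is the direction where one must produce a nearly optimal control, and I expect it to be the main obstacle. Given $\eta>0$, the idea is to select, via a measurable selection argument, a Borel control $\sigma_\eta\colon\oO\to\mathbb S^{N-1}$ such that $\M^\eps_{\sigma_\eta(x)} v_\eps(x) \ge \M^\eps v_\eps(x) - \eta = v_\eps(x) - \eta$ for every $x\in\oO$; the existence of such a measurable $\eta$-optimal selection is the delicate technical point (one uses that $\xi\mapsto\M^\eps_\xi v_\eps(x)$ is continuous in $\xi$ and that the dependence on $x$ is measurable, e.g. via a measurable-selection / von Neumann type theorem, or by a direct construction exploiting continuity). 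With this control, the process $W_n := v_\eps(x_{n\wedge\tau_{\sigma_\eta}}) + \eta\,(n\wedge\tau_{\sigma_\eta})$ becomes a submartingale: the per-step drift is at least $-\eta$ from the near-optimality, and adding $\eta$ per step compensates. Then $\mathbb E^{x_0}_{\sigma_\eta}[v_\eps(x_{\tau_{\sigma_\eta}})] + \eta\,\mathbb E^{x_0}_{\sigma_\eta}[\tau_{\sigma_\eta}] \ge v_\eps(x_0)$. Using the bound $\mathbb E^{x_0}_{\sigma_\eta}[\tau_{\sigma_\eta}] \le C(N,\Omega)/\eps^2$ from \eqref{taubound}, and $v_\eps = F$ on $\Gamma_\eps$, we get $u^{\sigma_\eta}_\eps(x_0) \ge v_\eps(x_0) - \eta\, C(N,\Omega)/\eps^2$, hence $u_\eps(x_0)\ge v_\eps(x_0) - \eta\,C(N,\Omega)/\eps^2$. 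Letting $\eta\to0$ (with $\eps$ fixed) gives $u_\eps(x_0)\ge v_\eps(x_0)$.

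\emph{Remarks on the obstacles.} Besides the measurable-selection point in Step~2, two bookkeeping issues need care: first, that the submartingale/supermartingale properties genuinely hold at the stopped times — this follows from the optional-stopping structure and the fact that $\gamma[x] = \delta_x$ on $\Gamma_\eps$, so once the process reaches $\Gamma_\eps$ it stays put and $v_\eps(x_{n\wedge\tau})$ is constant thereafter; second, the interchange of limit and expectation, which is justified by uniform boundedness of $v_\eps$ and \eqref{processends}. Combining Steps~1 and~2 gives $u_\eps = v_\eps$ on $\oO$, and on $\Gamma_\eps$ both equal $F$ trivially, so $u_\eps = v_\eps$ in $\Omega_\eps$. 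The dynamic programming principle $u_\eps = \M^\eps u_\eps$ in $\oO$ is then immediate from the fixed-point property of $v_\eps$ and l.s.c.-ness (so that $(u_\eps)_* = u_\eps$).
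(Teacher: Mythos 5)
Your Step 1 coincides with the paper's argument: by Lemma \ref{key} and the fixed-point identity $\M^\eps v_\eps=v_\eps$, the process $v_\eps\circ\mathbf{x}_n$ is a supermartingale for every control, and optional stopping plus \eqref{processends} gives $u_\eps\le v_\eps$. For the reverse inequality your route is different from the paper's displayed proof (which argues by contradiction from the supposition $u_\eps(x_0)\ge v_\eps(x_0)+\eta$, using only the supermartingale property); you instead take the standard constructive route of an $\eta$-optimal Borel control, the corrected submartingale $v_\eps(x_{n\wedge\tau})+\eta(n\wedge\tau)$, and the stopping-time bound \eqref{taubound}. That bookkeeping, and the passage $\eta\to0$ at fixed $\eps$, are correct.

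The genuine gap is in your justification of the measurable selection. You assert that $\xi\mapsto\M^\eps_\xi v_\eps(x)$ is continuous in $\xi$; it is not. The averaged term is independent of $\xi$, but $\xi\mapsto v_\eps(x\pm\eps\xi)$ evaluates $v_\eps$ at single points, and $v_\eps$ is only lower semicontinuous (Lemma \ref{existenceanduqineness}), so this map is merely l.s.c.\ in $\xi$. Hence the Carath\'eodory-type or Dubins--Savage selection theorems you invoke do not apply directly, and the supremum over $\xi$ need not even be attained. The gap is repairable in at least two ways. (i) Use l.s.c.\ instead of continuity: for an l.s.c.\ function of $\xi$ the supremum over a countable dense set $\{\xi_j\}\subset\mathbb{S}^{N-1}$ equals the full supremum, so you may set $\sigma_\eta(x):=\xi_{j(x)}$ where $j(x)$ is the least $j$ with $\M^\eps_{\xi_j}v_\eps(x)>\M^\eps v_\eps(x)-\eta$; each set $\{x\colon j(x)=j\}$ is Borel because $x\mapsto\M^\eps_{\xi_j}v_\eps(x)$ is Borel and $\M^\eps v_\eps$ is l.s.c.\ by Lemma \ref{lsc}. (ii) Alternatively, approximate $v_\eps$ from below by an increasing sequence of continuous functions, apply Dubins--Savage to those, and pass to the limit. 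With either repair your Step 2 closes, and the dynamic programming principle $u_\eps=\M^\eps u_\eps$ follows as you state.
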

(See \S \ref{proofofthm21} below  for the proof.)\par

The following comparison principle for $\epsilon$-mean value solutions follows at once from
formula \eqref{udef} and Theorem \ref{meanvalue=stochastic}.

\begin{lemma}\label{comparisonprinciple}
Let  $v_{\epsilon}$ be the $\epsilon$-mean value solution with boundary values $F$ and
let $w_{\epsilon}$ be the $\epsilon$-mean value solution with boundary values $G$. If
$F\le G$ on $\partial\Omega$ (extended so that we still have $F\le G$ on $\Omega_\eps$), then $v_{\eps}\le w_{\eps}$ in $\oO$. 
\end{lemma}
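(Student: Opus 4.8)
The plan is to read off the inequality from the stochastic representation \eqref{udef}, using Theorem \ref{meanvalue=stochastic} to pass back and forth between the mean value solution and the $\eps$-stochastic solution. The key observation is that the random walk itself — the transition probabilities $\gamma[x]$, the measures $\mathbb{P}^{x_0}_\sigma$, and the stopping time $\tau_\sigma$ — depends only on $\Omega$, $\eps$, $q$ and the chosen control $\sigma$, and not at all on the prescribed boundary data. Hence the same process can be used to represent both $v_\eps$ and $w_\eps$.

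Concretely, fix $x_0\in\oO$ and a Borel control $\sigma$. By \eqref{processends} the process terminates $\mathbb{P}^{x_0}_\sigma$-almost surely, so $x_{\tau_\sigma}\in\Gamma_\eps$ for a.e.\ $\omega$. Since $F\le G$ on $\Gamma_\eps$ by hypothesis, we have $F(x_{\tau_\sigma}(\omega))\le G(x_{\tau_\sigma}(\omega))$ for a.e.\ $\omega$, and integrating against $\mathbb{P}^{x_0}_\sigma$ gives
\[
u^{\sigma}_{\epsilon}(x_0)=\mathbb{E}^{x_0}_\sigma[F(x_{\tau_\sigma})]\ \le\ \mathbb{E}^{x_0}_\sigma[G(x_{\tau_\sigma})].
\]
Taking the supremum over all admissible $\sigma$ on both sides, and using \eqref{udef} for the boundary data $F$ on the left and for the boundary data $G$ on the right, we obtain that the $\eps$-stochastic solution with data $F$ is pointwise $\le$ the $\eps$-stochastic solution with data $G$ on $\oO$. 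Theorem \ref{meanvalue=stochastic}, applied once with boundary data $F$ and once with boundary data $G$, identifies these two stochastic solutions with $v_\eps$ and $w_\eps$ respectively, so $v_\eps\le w_\eps$ in $\oO$. On $\Gamma_\eps$ the inequality is immediate since there $v_\eps=F\le G=w_\eps$.

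There is essentially no obstacle here: the only point requiring a word of care is that the stopping time $\tau_\sigma$ and the law of the trajectory are independent of the boundary function, which is exactly what lets us compare $\mathbb{E}^{x_0}_\sigma[F(x_{\tau_\sigma})]$ and $\mathbb{E}^{x_0}_\sigma[G(x_{\tau_\sigma})]$ by integrating the pointwise bound $F(x_{\tau_\sigma})\le G(x_{\tau_\sigma})$ against one and the same measure. Everything else is the monotonicity of expectation and of the supremum, together with the already-established identity of Theorem \ref{meanvalue=stochastic}.
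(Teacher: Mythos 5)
Your argument is correct and is exactly the one the paper intends: the text states that the comparison principle ``follows at once from formula \eqref{udef} and Theorem \ref{meanvalue=stochastic},'' and your write-up simply makes explicit that the law of the walk and the stopping time are independent of the boundary data, so the pointwise bound $F(x_{\tau_\sigma})\le G(x_{\tau_\sigma})$ integrates against the same measure and survives the supremum over $\sigma$. Nothing is missing.
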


We  next  adapt the Barles and Perthame procedure as in  \cite{BS91} of semi-continuous regularizations. We remark that in \cite{BS91} the domain $\Omega$ must be of class $C^{2}$ and the equation must satisfy a strong uniqueness property involving the viscosity interpretation of the boundary Dirichlet data.  We replace the strong uniqueness property with uniform boundary estimates for the discretizations  $u_{\epsilon}$ to reach the same uniform convergence conclusion as in \cite{BS91}.

\begin{lemma}\label{boundaryestimate}
	Given $\eta>0$ we can find $\epsilon_{0}>0$  and $\epsilon_1>0$ such that whenever
	 $y_{0}\in\partial\Omega$ and $\epsilon< \epsilon_{0}$ we have
	$$|u_{\epsilon}(x)- F(y_{0}) |\le \eta$$
	for $x\in B_{\epsilon_2}(y_0)\cap\overline{\Omega}$.
\end{lemma}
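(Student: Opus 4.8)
The plan is to work with the stochastic representation and to control how far from $y_{0}$ the game can exit. By Theorem \ref{meanvalue=stochastic} and \eqref{udef}, $u_{\epsilon}(x)=\sup_{\sigma}\mathbb{E}^{x}_{\sigma}[F(x_{\tau_{\sigma}})]$. Since $F$ is $L$-Lipschitz on $\partial\Omega\cup\Gamma_{\epsilon}$ (with $L=\Lip F$) and $x_{\tau_{\sigma}}\in\Gamma_{\epsilon}$, for every $r>0$,
\[
|u_{\epsilon}(x)-F(y_{0})|\le\sup_{\sigma}\mathbb{E}^{x}_{\sigma}|F(x_{\tau_{\sigma}})-F(y_{0})|\le L\,r+L\,\diam(\Omega_{\epsilon})\sup_{\sigma}\mathbb{P}^{x}_{\sigma}\bigl(|x_{\tau_{\sigma}}-y_{0}|>r\bigr).
\]
Fixing $r:=\min\{\eta/(4L),\,h_{0}/10\}$ (so $Lr\le\eta/4$; $h_{0}$ is the cone height below), the lemma reduces to a \emph{uniform exit estimate}: $\sup_{\sigma}\mathbb{P}^{x}_{\sigma}(|x_{\tau_{\sigma}}-y_{0}|>r)\to0$ as $|x-y_{0}|\to0$ and $\epsilon\to0$, uniformly in $y_{0}\in\partial\Omega$.

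I would obtain this by iterating a one-scale estimate. A bounded Lipschitz domain satisfies a \emph{uniform exterior cone condition}: there are $\theta_{0}\in(0,\pi/2)$ and $h_{0}>0$, depending only on $\Omega$, so that for each $y_{0}\in\partial\Omega$ there is a unit vector $e=e(y_{0})$ with $\overline{K}\cap\overline{\Omega}=\{y_{0}\}$, where $K=\{x:(x-y_{0})\cdot e>|x-y_{0}|\cos\theta_{0},\ 0<|x-y_{0}|<h_{0}\}$; hence for every $\rho\le h_{0}$ the ball $\overline{B}_{s\rho}(y_{0}+\tfrac{\rho}{2}e)$, with $s:=\tfrac14\tan^{2}(\theta_{0}/2)\in(0,\tfrac12)$, lies in $K\subset\mathbb{R}^{N}\setminus\overline{\Omega}$ and at positive distance from $\overline{\Omega}$. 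The one-scale estimate I would establish is: there exist $c_{0}=c_{0}(N,p,\Omega)>0$ and $C=C(N,p,\Omega)$ so that for all small $\epsilon$, all $y_{0}\in\partial\Omega$, all $\rho\in[C\epsilon,h_{0}]$, all controls $\sigma$ and all $x_{0}\in\overline{B}_{\rho/2}(y_{0})\cap\overline{\Omega}$, one has $\mathbb{P}^{x_{0}}_{\sigma}(\text{the walk meets }\Gamma_{\epsilon}\text{ before leaving }B_{2\rho}(y_{0}))\ge c_{0}$. Granting it, I run the game in rounds: with $\rho_{1}:=2\max\{|x-y_{0}|,C\epsilon\}$, the $j$-th round starts from the current position $Y_{j-1}\in\overline{B}_{\rho_{j}/2}(y_{0})\cap\overline{\Omega}$ and ends when the walk either exits $\Omega$ or reaches distance $2\rho_{j}$ from $y_{0}$, whereupon $\rho_{j+1}:=2\max\{|Y_{j}-y_{0}|,C\epsilon\}\le4\rho_{j}+2\epsilon$. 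By the strong Markov property (for fixed $\sigma$ the walk is a time-homogeneous Markov chain) and the one-scale estimate, each round with $\rho_{j}\le h_{0}$ fails to end in an exit with probability at most $1-c_{0}$; since $\rho_{j}$ grows geometrically there are $M\asymp\log\bigl(r/\max\{|x-y_{0}|,C\epsilon\}\bigr)$ rounds before the scale exceeds $r$, and on $\{|x_{\tau_{\sigma}}-y_{0}|>r\}$ all of them fail. Hence $\sup_{\sigma}\mathbb{P}^{x}_{\sigma}(|x_{\tau_{\sigma}}-y_{0}|>r)\le(1-c_{0})^{M}$, which is $\le\eta/(4L\diam\Omega_{\epsilon})$ once $M$ is large, i.e. once $|x-y_{0}|$ and $\epsilon$ lie below thresholds $\epsilon_{1}$ and $\epsilon_{0}$; with the first display this gives $|u_{\epsilon}(x)-F(y_{0})|\le\eta$.

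For the one-scale estimate I would rescale ($y_{0}\mapsto0$, divide lengths by $\rho$): then $\rho=1$, the step size is $\widetilde\epsilon=\epsilon/\rho\le1/C$, the exterior cone keeps its angle $\theta_{0}$ and height $\ge1$, and there is an exterior ball $B_{s}(z_{0})$ with $z_{0}=\tfrac12 e$, $s=\tfrac14\tan^{2}(\theta_{0}/2)$, $\overline{B_{s}(z_{0})}\cap\overline{\Omega}=\emptyset$ and $d(z_{0},\overline{\Omega})>s$. Take $\Psi(x):=e^{-\kappa|x-z_{0}|}$. Computing the Hessian eigenvalues of a radial function, on $\{s/2\le|x-z_{0}|\le3\}$ one gets $\widetilde{\D}_{p}\Psi=\kappa e^{-\kappa|x-z_{0}|}\bigl(\kappa-(N+p-3)/|x-z_{0}|\bigr)\ge c>0$ for $\kappa=\kappa(N,p,\theta_{0})$ large, where $\widetilde{\D}_{p}v:=(p-1)\lambda_{1}+\lambda_{2}+\cdots+\lambda_{N}=\Delta v+(p-2)\lambda_{1}(D^{2}v)$ is the operator obtained from $\D_{p}$ by interchanging the largest and smallest eigenvalue. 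Analogously to \eqref{quadid}--\eqref{smoothexpansion}, for $v\in C^{3}$,
\[
\inf_{|\xi|=1}\M^{\widetilde\epsilon}_{\xi}v(x)-v(x)=\frac{\widetilde\epsilon^{2}}{C_{N,p}}\bigl(\widetilde{\D}_{p}v(x)+O(\widetilde\epsilon)\bigr),
\]
the $O(\widetilde\epsilon)$ depending on $\|v\|_{C^{3}}$. As $\Psi$ is smooth with bounded $C^{3}$-norm on $\{s/2\le|x-z_{0}|\le3\}$, this forces $\inf_{|\xi|=1}\M^{\widetilde\epsilon}_{\xi}\Psi\ge\Psi$ there once $\widetilde\epsilon$ is small (this fixes $C$); in particular $\M^{\widetilde\epsilon}_{\sigma(x)}\Psi(x)\ge\Psi(x)$ for \emph{every} control whenever $x\in\overline{\Omega}\cap B_{2}(0)$, since then $B_{\widetilde\epsilon}(x)\subset\{s/2\le|\cdot-z_{0}|\le3\}$ (using $d(z_{0},\overline{\Omega})>s$). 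By Lemma \ref{key}, $\{\Psi(x_{n\wedge T})\}$ is then a bounded submartingale under any $\sigma$, where $T$ is the first time the walk meets $\Gamma_{\widetilde\epsilon}$ or leaves $B_{2}(0)$ ($T\le\tau_{\sigma}<\infty$ a.s. by \eqref{processends}). Optional stopping, with $|x_{0}-z_{0}|\le1$, $|x_{T}-z_{0}|\ge3/2$ on $\{x_{T}\notin B_{2}(0)\}$ and $0<\Psi\le1$, gives
\[
e^{-\kappa}\le\Psi(x_{0})\le\mathbb{E}^{x_{0}}_{\sigma}[\Psi(x_{T})]\le\mathbb{P}^{x_{0}}_{\sigma}\bigl(\text{exit before leaving }B_{2}(0)\bigr)+e^{-3\kappa/2},
\]
so the exit probability is at least $c_{0}:=e^{-\kappa}-e^{-3\kappa/2}>0$, as claimed.

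The crux is this one-scale estimate, and two points in it need care. First, the \emph{sign}: $\D_{p}$ is convex but not odd, and here the control is adversarial (it may choose $\sigma(x)$ worst for the barrier), so the relevant comparison operator is the dual $\widetilde{\D}_{p}$, not $\D_{p}$, and $\Psi=e^{-\kappa|x-z_{0}|}$ must be a \emph{strict} subsolution of $\widetilde{\D}_{p}$—strictness being what absorbs the $O(\widetilde\epsilon)$ discretization error. Second, that error is uniform only for a barrier built at a fixed scale, which is why one rescales and why the conclusion genuinely requires both $|x-y_{0}|$ and $\epsilon$ small, and why the exterior \emph{cone}, furnishing a comparable exterior ball at every scale, is exactly the right hypothesis for a Lipschitz domain. (Alternatively one could construct an $\M^{\epsilon}$-supersolution directly and invoke Lemma \ref{comparisonprinciple}, using that $\D_{p}$ is uniformly elliptic; the route above trades that for the probabilistic one-scale estimate.) The remaining work—bookkeeping in the dyadic iteration (keeping $\rho_{j}\le h_{0}$, absorbing the $\le\epsilon$ overshoot each round, applying the strong Markov property)—is routine.
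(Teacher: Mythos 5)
Your argument is correct in outline and takes a genuinely different route from the paper's. The paper proceeds analytically: it first shows (Corollary \ref{smoothcase}) that for a smooth solution $w$ of $\D_p w=0$ the discrete solution with boundary data $w$ satisfies $|w-w_\eps|\le C\eps$; it then builds explicit radial solutions $U_k$ of $\D_p U=0$ on ring domains around exterior balls (supplied, as in your proof, by the Lipschitz geometry) and iterates the comparison principle (Lemma \ref{comparisonprinciple}) at \emph{decreasing} dyadic scales $\delta_k=\delta/4^{k-1}$, forcing the oscillation $M_k^\eps-m^\eps$ to contract by a fixed factor $\theta$ at each step (Lemma \ref{Lemma induction k}); the lower bound is treated separately with harmonic barriers, using that harmonic functions are $\D_p$-subsolutions. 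You instead argue probabilistically: a single barrier $e^{-\kappa|x-z_0|}$, which you correctly make a \emph{strict} subsolution of the dual operator $\widetilde{\D}_p=\Delta+(p-2)\lambda_1(D^2\cdot)$ (the right operator, since uniformity over adversarial controls forces $\inf_{|\xi|=1}\M^\eps_\xi$ rather than $\sup_{|\xi|=1}\M^\eps_\xi$), is checked to be a discrete submartingale directly from the Taylor expansion; this yields a one-scale exit estimate uniform in $\sigma$, which you iterate at \emph{increasing} scales via the strong Markov property to get geometric decay of $\sup_\sigma\mathbb{P}^x_\sigma(|x_{\tau_\sigma}-y_0|>r)$, and the Lipschitz bound on $F$ then gives the two-sided estimate in one stroke. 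What each buys: the paper's route recycles machinery it needs anyway (smooth-case convergence, the comparison principle) and yields an explicit contraction rate; yours dispenses with Corollary \ref{smoothcase}, the Dubins--Savage selection theorem and the intermediate discrete barriers $U_k^\eps$, and handles the upper and lower bounds symmetrically, at the cost of invoking the strong Markov property for $\mathbb{P}^{x_0}_\sigma$ (standard for the time-homogeneous chain generated by a fixed Borel control, but not formally established in the paper) and of the dyadic bookkeeping you defer, which is indeed routine. Both proofs ultimately rest on the same geometric input: an exterior ball of radius comparable to the scale, at every scale below a fixed threshold.
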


(See \S \ref{proofof26} below  for the proof.)\par

For $x\in \oO$ and $0< \delta\leq \dist(x,\partial\Omega)$ consider the sets
\begin{equation}\label{setu}
S(x, \delta) := \left\{ u_{\epsilon}(y) \colon \epsilon<\delta  \text{ and }  |y-x| \leq \delta
\right\}.
\end{equation} and the functions
$$U_{\delta}(x) := \sup S(x,\delta).$$
Note that the  set  $\sup S(x,\delta)$ is bounded above,  that 
$S(x, \delta_{1})\subset S(x, \delta_{2})$, and 
\begin{equation}\label{tres}
u_{\epsilon}(x)\le U_{\delta_{1}}(x)\le U_{\delta_{2}}(x)
\end{equation} whenever $\epsilon< \delta_{1}\le \delta_{2}\leq\dist(x,\partial\Omega)$.  
Thus, the following limit is always well-defined
\begin{equation}\label{defubar}
\overline{u} (x) := \lim_{\delta\to0}U_{\delta}(x)
\end{equation}
The function $\overline{u}\colon\Omega\mapsto \mathbb{R}$ is the half-relaxed upper limit of the family $\{u_{\epsilon}\}_{\epsilon}$ when 
$\epsilon\to0$ and it is always u.s.c.\par
Similarly, we consider the functions
$$U^{\delta}(x) := \inf S(x,\delta), $$ so that
\begin{equation}\label{tres-2}
u_{\epsilon}(x)\ge U^{\delta_{1}}(x)\ge U^{\delta_{2}}(x)
\end{equation} whenever $\epsilon< \delta_{1}\le \delta_{2}\leq\dist(x,\partial\Omega)$.  
Thus, the following limit is always well-defined
\begin{equation}\label{defubar}
\underline{u}(x) := \lim_{\delta\to0}U^{\delta}(x)
\end{equation}
The function $\underline{u}\colon\Omega\mapsto \mathbb{R}$ is the half-relaxed lower limit of the family $\{u_{\epsilon}\}_{\epsilon}$ when 
$\epsilon\to0$ and it is always l.s.c.\par

Since $U^\delta\leq U_\delta$ by definition, also $\underline{u}\leq\overline{u}$ in $\oO$. On the boundary $\partial\Omega$, the oposite inequality $\underline{u}\geq\overline{u}$ holds, because
Lemma \ref{boundaryestimate} implies
\begin{equation}\label{upperbound}
\overline{u}(y) \le F(y),
\end{equation}
\begin{equation}\label{lowerbound}
\underline{u}(y) \ge  F(y),
\end{equation}
for every $y\in\partial\Omega$.

 \begin{lemma}\label{viscosity}
$\overline{u}$ is a viscosity subsolution  and $\underline{u}$ is a viscosity supersolution of $ \D_{p}u =0$ in $\Omega$.
 \end{lemma}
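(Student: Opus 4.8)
The plan is to prove the two halves separately, using the now-standard comparison-with-test-functions argument adapted to the Dominative $p$-Laplacian via the expansion \eqref{quadid}. I will show $\overline{u}$ is a viscosity subsolution; the argument for $\underline{u}$ is symmetric. Fix $x_0\in\Omega$ and a $C^2$ function $\phi$ touching $\overline{u}$ from above at $x_0$ with $\overline u(x_0)=\phi(x_0)$, and assume for contradiction that $\D_p\phi(x_0)<0$. By continuity of $x\mapsto\D_p\phi(x)$ (the eigenvalues depend continuously on $D^2\phi$) there is a small ball $B_r(x_0)\Subset\Omega$ on which $\D_p\phi<0$. After subtracting a strict paraboloid $\phi_\delta(x):=\phi(x)+\delta|x-x_0|^2$ with $\delta>0$ tiny, we may even arrange $\D_p\phi_\delta<-c<0$ on $B_r(x_0)$, that $\phi_\delta$ still touches $\overline u$ from above at $x_0$, and that $\phi_\delta\ge\overline u+\delta r^2/2$ on the annulus $|x-x_0|\in[r/2,r]$; this last "strict separation on the boundary of the ball" is the mechanism that produces the contradiction.

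Next I would transfer this to the discrete level. By \eqref{smoothexpansion} applied to the $C^3$ (indeed $C^\infty$) function $\phi_\delta$, there is $\eps_0>0$ so that for $\eps<\eps_0$ and all $x\in B_r(x_0)$,
\[
C_{N,p}\,\frac{\M^\eps\phi_\delta(x)-\phi_\delta(x)}{\eps^2}=\D_p\phi_\delta(x)+O(\eps)<-\tfrac{c}{2}<0,
\]
so $\M^\eps\phi_\delta<\phi_\delta$ on $B_r(x_0)$: the test function is a strict \emph{supersolution} of the discrete equation there. Meanwhile, by the definition \eqref{defubar} of $\overline u$ as the half-relaxed upper limit, there exist points $y_\eps\to x_0$ and step-sizes $\eps\to 0$ with $u_\eps(y_\eps)\to\overline u(x_0)=\phi_\delta(x_0)$; combining with the strict separation on the annulus and the (uniform in $\eps$, via Lemma \ref{boundaryestimate} and \eqref{taubound}) local boundedness/convergence behaviour of $u_\eps$, one checks that for small $\eps$ the quantity $u_\eps-\phi_\delta$ attains a local maximum over $\overline{B}_r(x_0)$ at some interior point $z_\eps\in B_{r/2}(x_0)$, with $(u_\eps-\phi_\delta)(z_\eps)\ge -o(1)$ while $(u_\eps-\phi_\delta)\le -\delta r^2/2$ on the annulus.

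Now I invoke the dynamic programming principle $u_\eps=\M^\eps u_\eps$ (Theorem \ref{meanvalue=stochastic}, together with Lemma \ref{existenceanduqineness}) at the point $z_\eps$. Since $\overline B_\eps(z_\eps)\subset B_r(x_0)$ for $\eps$ small and $u_\eps\le\phi_\delta+(u_\eps-\phi_\delta)(z_\eps)$ on $\overline B_\eps(z_\eps)$ with equality at $z_\eps$, monotonicity (property (4) of $\M^\eps$) and the affine-shift property give
\[
u_\eps(z_\eps)=\M^\eps u_\eps(z_\eps)\le \M^\eps\phi_\delta(z_\eps)+(u_\eps-\phi_\delta)(z_\eps)<\phi_\delta(z_\eps)+(u_\eps-\phi_\delta)(z_\eps)=u_\eps(z_\eps),
\]
a contradiction. (Here I use that $u_\eps$ is l.s.c.\ in $\overline\Omega$ so the lower envelope in \eqref{operatorTsigma} does not interfere, i.e.\ $\To^\eps u_\eps=\M^\eps u_\eps$.) Hence $\D_p\phi(x_0)\ge 0$, proving $\overline u$ is a subsolution; running the inequalities the other way with $\phi$ touching $\underline u$ from below and $\M^\eps u_\eps\ge\M^\eps\phi_\delta+(\text{const})$ gives that $\underline u$ is a supersolution.

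The main obstacle I anticipate is the interior-maximum-point step: producing $z_\eps\in B_{r/2}(x_0)$ where $u_\eps-\phi_\delta$ is locally maximized and nearly nonnegative. The subtlety is that $u_\eps$ is only l.s.c., not continuous, and the relation between $u_\eps$ for different $\eps$ near $x_0$ and $\overline u(x_0)$ is mediated only through the monotone envelopes $U_\delta$ of \eqref{setu}. One must argue carefully — as in the Barles–Souganidis/Barles–Perthame scheme \cite{BS91} — that along the subsequence realizing $\overline u(x_0)$, the functions $u_\eps$ restricted to $\overline B_r(x_0)$ have suprema converging to something $\le\overline u$ on the annulus (by definition of $U_\delta$ and $r$ fixed), so that the sup over the closed ball is asymptotically attained in the interior rather than on the annulus. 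The uniform bound \eqref{taubound} and Lemma \ref{boundaryestimate} guarantee no mass escapes or blows up, which keeps the whole comparison on $\overline B_r(x_0)$ self-contained and makes the contradiction genuine.
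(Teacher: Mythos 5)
Your overall strategy is the standard consistency argument for half-relaxed limits and is, in substance, the same as the paper's: pass from the test function to the discrete operator via \eqref{smoothexpansion}, and use the dynamic programming principle $u_\eps=\M^\eps u_\eps$ together with monotonicity of $\M^\eps$ at a near-extremum of $u_\eps-\phi$. (The paper argues directly, obtaining $\phi(y_n)\le\M^{\eps_n}\phi(y_n)+\eps_n^3$ and hence $\D_p\phi(x_0)\ge0$, rather than by contradiction with a perturbed $\phi_\delta$; that difference is cosmetic.)

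The genuine gap is exactly the step you flag and then skip: the existence of points $z_\eps\to x_0$ at which $u_\eps-\phi_\delta$ is (approximately) maximized over $\overline{B}_r(x_0)$ with $(u_\eps-\phi_\delta)(z_\eps)\to 0$. This is the entire technical content of the paper's Proposition \ref{limitsofmaxima}, and it is not a routine check. Two things are needed that you do not supply. First, your inequality $(u_\eps-\phi_\delta)\le-\delta r^2/2$ on the annulus requires $u_\eps\le\overline{u}+o(1)$ \emph{uniformly} on the compact annulus as $\eps\to0$; the definition \eqref{setu}--\eqref{defubar} only gives this pointwise, and upgrading it to a uniform bound is a Dini-type argument exploiting the upper semicontinuity of $\overline{u}$ and the monotonicity of $\delta\mapsto U_\delta$ (this is the appendix Lemma on compact sets that feeds into Proposition \ref{limitsofmaxima}). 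Second, since $u_\eps$ is only l.s.c., the function $u_\eps-\phi_\delta$ is l.s.c.\ and its supremum over $\overline{B}_r(x_0)$ need not be \emph{attained}; your displayed chain of inequalities uses $u_\eps\le\phi_\delta+(u_\eps-\phi_\delta)(z_\eps)$ on $\overline{B}_\eps(z_\eps)$, which presupposes attainment. One must instead work with approximate maximizers up to a slack $s_\eps=o(\eps^2)$ (the paper uses $\eps_n^3$ in \eqref{approxmin}); your contradiction survives this because the strict inequality $\M^\eps\phi_\delta<\phi_\delta$ has a gap of order $\eps^2$, but the argument has to be rewritten in that form. A minor further slip: you speak of ``subtracting'' the paraboloid while in fact adding $\delta|x-x_0|^2$, which is the correct operation (it raises every eigenvalue of the Hessian by $2\delta$ and so changes $\D_p\phi$ only by $2\delta(N+p-2)$, preserving negativity for small $\delta$). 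With Proposition \ref{limitsofmaxima} (or an equivalent statement) actually proved, your argument closes.
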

 \vskip -.15in
(See \S \ref{proofof27} below  for the proof.)
 \begin{theorem}\label{main}
 We have $\overline{u} =\underline{u}$, denoted by $u$. It is   the unique solution to the Dirichlet problem
(\ref{introdirichlet}). 
 Moreover $u_{\epsilon}\to u$ uniformly in $\overline{\Omega}$
\end{theorem}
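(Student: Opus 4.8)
## Proof Proposal for Theorem \ref{main}

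The plan is to combine the comparison principle for the limit PDE with the boundary estimates already established. The strategy is the classical Barles–Perthame relaxed-limits argument, adapted here because $\D_p$ is uniformly elliptic (hence enjoys a comparison principle) and the boundary data are attained in the strong sense via Lemma~\ref{boundaryestimate}. First I would recall from Lemma~\ref{viscosity} that $\overline u$ is an u.s.c.\ viscosity subsolution and $\underline u$ is an l.s.c.\ viscosity supersolution of $\D_p u = 0$ in $\Omega$, and from \eqref{upperbound}–\eqref{lowerbound} that $\overline u \le F \le \underline u$ on $\partial\Omega$. Since $\overline u$ is u.s.c.\ on the compact set $\overline\Omega$ and $\underline u$ is l.s.c., they attain their extreme values, and on $\partial\Omega$ we have $\overline u \le F$ while $\underline u \ge F$; combined with the already-observed $\underline u \le \overline u$ in $\overline\Omega$, the boundary values actually agree and equal $F$.

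The heart of the argument is then the comparison principle for the uniformly elliptic, convex operator $\D_p$: if $w$ is an u.s.c.\ subsolution and $W$ an l.s.c.\ supersolution of $\D_p = 0$ in $\Omega$, with $w \le W$ on $\partial\Omega$, then $w \le W$ in $\Omega$. This is standard for uniformly elliptic second-order equations — it follows from the Jensen–Ishii lemma (theorem on sums) exactly as in the User's Guide of Crandall–Ishii–Lions, or one may simply cite the general theory in \cite{CC95}. Applying it with $w = \overline u$ and $W = \underline u$, and using $\overline u \le F = \underline u$ on $\partial\Omega$, yields $\overline u \le \underline u$ in $\Omega$. Together with the reverse inequality $\underline u \le \overline u$ that holds by construction, we conclude $\overline u = \underline u$ in $\overline\Omega$; call this common value $u$. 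It is then automatically continuous (being simultaneously u.s.c.\ and l.s.c.), it is a viscosity solution of $\D_p u = 0$ in $\Omega$, and it equals $F$ on $\partial\Omega$. Uniqueness of the solution to \eqref{introdirichlet} follows from the same comparison principle applied to any two solutions.

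It remains to upgrade the equality $\overline u = \underline u = u$ to uniform convergence $u_\eps \to u$ on $\overline\Omega$. This is a soft compactness argument: suppose not, so there exist $\eta_0 > 0$, a sequence $\eps_j \to 0$, and points $x_j \in \overline\Omega$ with $|u_{\eps_j}(x_j) - u(x_j)| \ge \eta_0$. Passing to a subsequence, $x_j \to x^\ast \in \overline\Omega$. Using the definitions of $U_\delta$ and $U^\delta$ in \eqref{setu} and the monotonicity \eqref{tres}, \eqref{tres-2}, one shows $\limsup_j u_{\eps_j}(x_j) \le \overline u(x^\ast)$ and $\liminf_j u_{\eps_j}(x_j) \ge \underline u(x^\ast)$; when $x^\ast \in \partial\Omega$ one invokes Lemma~\ref{boundaryestimate} directly instead. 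Since $\overline u(x^\ast) = \underline u(x^\ast) = u(x^\ast)$, we get $u_{\eps_j}(x_j) \to u(x^\ast) = u(x_j) + o(1)$ (using continuity of $u$), contradicting the separation by $\eta_0$. Hence the convergence is uniform.

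I expect the main obstacle to be purely bookkeeping rather than conceptual: one must be careful that the half-relaxed limits are defined only on $\Omega$ while the boundary behavior is controlled separately through Lemma~\ref{boundaryestimate}, so the comparison argument and the uniform-convergence argument both need a short case distinction between interior points and boundary points. The genuinely nontrivial input — the comparison principle for $\D_p$ — is available off the shelf because $\D_p$ is uniformly elliptic and convex for $p < \infty$ (Chapter 6 of \cite{CC95}), so no new PDE estimate is needed here.
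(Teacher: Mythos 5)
Your proposal is correct and follows essentially the same route as the paper: the paper's own proof is a one-sentence appeal to the comparison principle for viscosity solutions of $\D_{p}u=0$ together with the boundary estimates \eqref{upperbound}--\eqref{lowerbound}, and your write-up simply fills in the standard Barles--Perthame details (ordering of the relaxed limits, the compactness argument upgrading $\overline{u}=\underline{u}$ to uniform convergence) that the paper leaves implicit. No gap.
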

  \vskip -.15in
(See \S \ref{proofofthm22} below  for the proof.)

\section{Proofs}

\subsection{  Proof of Lemma \ref{lsc}. (l.s.c. of $\M^\eps v$):}\label{proofof21}

Let $v$ be bounded and l.s.c. in $\Omega_\eps$. That $\M^\eps v$ is bounded in $\oO$, is clear. For $x\in\oO$ we can write
\[\M_\xi^\eps v(x) = \int v(x+y)\,d\gamma_\xi(y)\]
where
\[\gamma_\xi(A) := q\frac{\vert B_\eps(0)\cap A\vert}{\vert B_\eps(0)\vert} + (1-q)\frac{\delta_\xi(A) + \delta_{-\xi}(A)}{2}.\]
Now,
\[\liminf_{x\to x_0}\int v(x+y)\,d\gamma_\xi(y) \geq \int \liminf_{x\to x_0}v(x+y)\,d\gamma_\xi(y)\geq \int v(x_0+y)\,d\gamma_\xi(y)\]
by Fatou's Lemma. Thus $\M_\xi^\eps v$ is l.s.c. in $\oO$ for each $\xi\in\mathbb{S}^{N-1}$, and so is
$\M^\eps v = \sup_{\vert \xi\vert = 1}\M_\xi^\eps v$ being a supremum of l.s.c. functions.
\normalcolor

\subsection{  Proof of Lemma \ref{existenceanduqineness}. (Existence and uniqueness of Mean Value solutions):}\label{proofof22}

We note that $\M^\eps[v_{*}](x)$ is well-defined in $\overline{\Omega}$ for bounded functions $v$ in $\Omega_\eps$.
Write $m_F := \inf_{\Gamma_\epsilon} F$  and $M_F := \sup_{\Gamma_\epsilon} F$. It is easily checked that
$$m_F \le u \le v\le M_F \implies m_F \le   \To^\eps u \le   \To^\eps v\le M_F. $$
That is, $\To^\eps$  is monotone and bounded. Therefore, given an initial function
$m_F \le v_0 \le  M_F$ in $\Omega_\eps$ we get a bounded sequence of functions
$$v_i := \To^\eps v_{i-1}, \ \ i = 1,2,\dots,$$
which, by induction, is monotone provided $v_1 \ge  v_0$  or $v_1 \le  v_0$. In particular, choosing $v_0(x)\equiv m_F$ yields an increasing sequence whose pointwise limit
$$v_\epsilon(x) := \lim_{i\to\infty} v_i(x).$$
satisfies $v_{\epsilon} |_{\Gamma_{\epsilon}} = F$. In $\overline\Omega$  we get
$v_\epsilon=  \M^\eps [(v_\epsilon)_*] = \M^\eps v_\epsilon$ since a point-wise increasing limit of l.s.c. functions is l.s.c. That is, $\To^\eps v_\eps = v_\eps$ in $\Omega_\eps$.\par
   
Suppose that we have two solutions 
$u$ and $w$, and assume for the sake of contradiction that
$$M=\sup_{\Omega_\eps}(u-w)>0.$$
Choose a sequence $(x_{n})\in \Omega_\eps$ such that $\lim_{n\to\infty}(u(x_{n})-w(x_{n}))=M$. Note that indeed $x_{n}\in\oO$. 
We have
\begin{align*}
u(x_{n})-w(x_{n}) &= \M^\eps u(x_{n})- \M^\eps w(x_{n})\\
	&\leq \M^\eps[u-w](x_n)\\
    &\le q\intav_{B_{\epsilon}(x_{n})} (u(y)-w(y))\,dy + (1-q)M
\end{align*}

Let $x_{n}\to x_{0}\in X$ and simplify to get
$$M\le \intav_{B_{\epsilon}(x_{0})}  (u(y)-w(y))\,dy$$
since $u$ and $w$ are bounded and integrable, and by the continuity of the ball measure.
We conclude that $(u-w)(x)=M$ for a.e. $x\in B_{\epsilon}(x_{0})$.  Note that this implies
$x_{0}\in \Omega$  and also that $B_{\epsilon}(x_{0})\subset \Omega$. Define the set 
$$G=\{ x\in \Omega \colon (u-w)=M \textrm{ a.e. in a neighborhood of }x \}.$$
We have shown that $G\not=\emptyset$. The same proof shows that $G$ is closed, and since it is
clearly open, we have $G=\Omega$ so that $(u-w)(x)=M $ a.e. in $\Omega$. \par
To reach a contradiction, take
$y\in\partial\Omega$ and choose $x_{n}\in\Omega$ such that $x_{n}\to y$ and $(u-w)(x_{n})=M$. 
\qed

\subsection{Proof of Lemma \ref{key}}\label{proofof23}

\begin{proof} The conditional expectation  $\mathbb{E}^{x_{0}}_{\sigma}\left[ v\circ \mathbf{x_{n}}\,|\, \mathcal{F}^{x_{0}}_{n-1})
	\right]$ is $\mathcal{F}^{x_{0}}_{n-1}$ measurable, and thus a function of $(x_{1}\dots,x_{n-1})$  such that
	$$\mathbb{E}^{x_{0}}_{\sigma}\left[ \chi_{A}\,
	\mathbb{E}^{x_{0}}_{\sigma}\left[ v\circ \mathbf{x_{n}}\,|\, \mathcal{F}^{x_{0}}_{n-1})
	\right] \right]=\mathbb{E}^{x_{0}}_{\sigma}\left[ \chi_{A}\,(v\circ \mathbf{x_{n}})
	\right]
	$$ for every cylinder $A=A_{1}\times\cdots\times A_{n-1}$.
	Do this for $v=\chi_{B}$, then for simple functions, and then use the monotone convergence theorem. \par
	Temporarily, set $ G(x_{1},\dots, {x_{n-1}})=\mathbb{E}^{x_{0}}_{\sigma}\left[ \chi_{B}\circ \mathbf{x_{n}}\,|\, \mathcal{F}^{x_{0}}_{n-1})
	\right]$. This function must satisfy
	$$\mathbb{E}^{x_{0}}_{\sigma}\left[ \chi_{A}\,G(x_{1},\dots, {x_{n-1}})
	\right]=\mathbb{E}^{x_{0}}_{\sigma}\left[ \chi_{A}\,(\chi_{B}\circ \mathbf{x_{n}})
	\right]
	$$ for every cylinder $A=A_{1}\times\cdots\times A_{n-1}$. We have 
	$$\mathbb{E}^{x_{0}}_{\sigma}\left[ \chi_{A}\,(\chi_{B}\circ \mathbf{x_{n}})
	\right]=\mathbb{P}^{x_{0}}_{\sigma}(A_{1}\times\cdots\times A_{n-1}\times B)$$
	and
	\begin{align*}
	&\mathbb{P}^{n, x_{0}}_{\sigma} (A_{1}\times\cdots\times A_{n-1}\times B)\\
	& =\int_{A_{1}}\left(\int_{A_{2}}\left(
	\cdots\int_{B} 1\, d\gamma[y_{n-1}](y_{n})
	 \cdots \right) d\gamma[y_{1}](y_{2})\right) d\gamma[x_{0}](y_{1})\\
	&=\!\int_{A_{1}}\!\biggl(\int_{A_{2}}\!\!\biggl(
	\cdots  q\,
	\frac{|B_{\epsilon}(y_{n-1})\cap B|}{|B_{\epsilon}(y_{n-1})|}+\frac{1-q}{2}\bigl(\delta_{y_{n-1}+\epsilon \sigma(y_{n-1})}(B)\\
	&\qquad\qquad\qquad\qquad +\delta_{y_{n-1}-\epsilon \sigma(y_{n-1})}(B)\bigr) \cdots\biggr)  d\gamma[y_{1}](y_{2})\biggr) \!d\gamma[x_{0}](y_{1})\\
	&=\int_{A_{1}}\left(\int_{A_{2}}\left(
	\cdots  G(y_{1}, \ldots, {y_{n-1}})
	\right) \cdots d\gamma[y_{1}](y_{2})\right) d\gamma[x_{0}](y_{1})\\
	\end{align*}
	for all cylinders $A=A_{1}\times\cdots\times A_{n-1}$. Thus, we must have
	\begin{align*}
	& G(y_{1}, \ldots,{y_{n-1}})\\
	&= q
	\frac{|B_{\epsilon}(y_{n-1})\cap B|}{|B_{\epsilon}(y_{n-1})|} + \frac{1-q}{2}\,(\delta_{y_{n-1}+\epsilon \sigma(y_{n-1})}(B)+\delta_{y_{n-1}-\epsilon \sigma(y_{n-1})}(B))\\
	&= \M^\eps_{\sigma(y_{n-1})}\chi_B(y_{n-1}).
	\end{align*}
\end{proof}

\subsection{Proof of Lemma \ref{martingale1}. ($|x_{n}|^{2}-n c_{N,p}  \epsilon^{2}$ is a martingale).}\label{proofof24}

\begin{proof}
	The dominative $p$-Laplacian of the paraboloid $\phi(x) := \vert x\vert^2$ is $\D_p\phi(x) = 2(N+p-2)$ and $\M^\eps_\xi\phi = \M^\eps\phi$ for every $\xi\in\mathbb{S}^{N-1}$.
	It follows from
	Lemma \ref{key} and \eqref{quadid} that
	\begin{align*}
	\mathbb{E}^{x_{0}}_\sigma\left[ |x_{n}|^{2}  \,|\, \mathcal{F}^{x_{0}}_{n-1}\right] (x_{n-1}) = \M^\eps_{\sigma(x_{n-1})}\phi(x_{n-1})
	&= \M^\eps\phi(x_{n-1})\\
	&= \phi(x_{n-1}) + \eps^2\frac{\D_p\phi}{C_{N,p}}\\
	&= \vert x_{n-1}\vert^2 + \eps^2c_{N,p},
	\end{align*}
	where $c_{N,p} := \frac{N+p-2}{N+p}$. Thus
	\begin{align*}
	\mathbb{E}^{x_{0}}_\sigma\left[|x_{n}|^{2}-n c_{N,p}  \epsilon^{2}  \,|\, \mathcal{F}^{x_{0}}_{n-1}\right] (x_{n-1})
	&= |x_{n-1}|^{2}+ c_{N,p}  \epsilon^{2}- n c_{N,p}  \epsilon^{2}\\
	&= |x_{n-1}|^{2}- (n-1) c_{N,p}  \epsilon^{2}
	\end{align*}
for every control $\sigma$.
\end{proof}

\subsection{Proof of Theorem \ref{meanvalue=stochastic}. (Equality of the stochastic solution $u_\eps$ and the Mean Value solution $v_\eps$)}\label{proofofthm21}

\begin{proof}
Certainly, if $x_0\in\Gamma_\eps$, then $\tau_\sigma = 0$ for all controls $\sigma$ and thus
\[u_\eps^\sigma(x_0) = \mathbb{E}^{x_{0}}_{\sigma}\left[F(x_{\tau_\sigma})\right] =  \mathbb{E}^{x_{0}}_{\sigma}\left[F(x_0)\right] = F(x_0).\]
If $x_0\in\oO$, then by Lemma \ref{key}, 
\[\mathbb{E}^{x_{0}}_{\sigma}\left[ v_{\epsilon}(x_n)\,|\, \mathcal{F}^{x_0}_{n-1}\right] (x_{n-1}) = \M^\eps_{\sigma(x_{n-1})} v_\eps (x_{n-1}) \leq \M^\eps v_\eps(x_{n-1}) = v_\eps(x_{n-1})\]
and $\{v_{\epsilon}\circ \mathbf{x_{n}}\}_{n\ge1}$ is a supermartingale with respect to the filtration
$\left\{ \mathcal{F}^{x_{0}}_{n}\right\}_{n\ge 1}$ for all controls $\sigma$. 

We use now Doob's theorem for supermartingales to move from the boundary back to $x_{0}$:
\begin{align*}
u_\eps(x_0)
	&= \sup_{\sigma}\left(\mathbb{E}^{x_{0}}_{\sigma}\left[  F(x_{\tau})\right]\right)\\
	&= \sup_{\sigma}\left(\mathbb{E}^{x_{0}}_{\sigma}\left[  v_{\epsilon}(x_{\tau})\right]\right)\\
	&\le \sup_{\sigma}\left(\mathbb{E}^{x_{0}}_{\sigma}\left[  v_{\epsilon}(x_{0})\right]\right)\\
	&= v_{\epsilon}(x_{0}).
\end{align*}
 
To show the opposite inequality,  we proceed by contradiction. Suppose that  there exists $x_{0}\in \Omega$ and $\eta>0$ such that $$u_\eps(x_0) \ge \eta+ v_{\epsilon}(x_{0}).$$
Choose a strategy $\sigma_{0}$ such that 
$$\mathbb{E}^{x_{0}}_{\sigma_{0}}\left[  F(x_{\tau})\right]\ge u_\eps(x_0) -\eta/2 \ge v_{\epsilon}(x_{0})+\eta/2. $$
We use again the fact that $v_{\epsilon}(x_{n})$ is a supermartingale with respect to any strategy and that
$v_{\epsilon}(y)=F(y)$ for $y\in \Gamma_{\eps}$ to deduce
$$ v_{\epsilon}(x_{0})\ge \mathbb{E}^{x_{0}}_{\sigma_{0}}\left[  v_{\epsilon}(x_{\tau})\right]=\mathbb{E}^{x_{0}}_{\sigma_{0}}\left[   F(x_{\tau})\right]\ge v_{\epsilon}(x_{0})+\eta/2,$$ which is clearly a contradiction.

\end{proof}

\subsection{Proof of Lemma \ref{boundaryestimate}}\label{proofof26}

The strategy to prove this lemma is as follows. First, we prove the convergence for smooth functions as done in \cite{PS08} for the $p$-Laplacian for functions  with non-vanishing gradient. We apply this result to the radial barriers which are translations and scaling of the fundamental solution, and then iterate following the argument of \cite{MPR12} for $p$-harmonic functions. 
\par
Consider the case of smooth functions $w\in C^3(\Omega_\eps)$
satisfying $\D_p w=0$ in the interior of $\Omega_\eps$.
Since the function $w$ is continuous we can apply the Dubins-Savage selection theorem (Theorem 5.3.1 in \cite{S98}) 
to deduce the existence of an optimal Borel strategy $\sigma_{0}$ such that
$$\M^\eps w(x) = \M_{\sigma_{0}(x)}^\eps w(x). $$ 
Note that  from the expansion
\eqref{smoothexpansion} we have, uniformly in 
$\oO$ that 
\begin{equation}\label{uniformexpansion}
w(x)=\M^\eps w(x)+O(\epsilon^3).
\end{equation}

\begin{lemma}\label{subsuper} There exists a constant $C_{1}>0$ that depends on $v$ and $\Omega$ but it is independent of $\epsilon>0$,  such that:\par
 (i) For an arbitrary control $\sigma$ the sequence of random variables
$$M_k=w(x_k)-C_1 k \epsilon^3$$ is a \textsc{supermartingale}.\par
(ii) For the control $\sigma_{0}$ defined above the sequence of random variables
$$N_k=w(x_k)+C_1 k \epsilon^3$$ is a \textsc{submartingale}
\end{lemma}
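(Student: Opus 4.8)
The plan is to combine the expansion \eqref{uniformexpansion} with Lemma \ref{key} to control the one-step conditional expectation of $w$ along the process, and then to convert the resulting error estimate into (super/sub)martingale statements by subtracting a suitable linear-in-$k$ drift. First I would record the precise meaning of \eqref{uniformexpansion}: since $w\in C^3(\Omega_\eps)$ and $\D_p w = 0$, \eqref{smoothexpansion} gives $\M^\eps w(x) = w(x) + O(\eps^3)$ uniformly for $x\in\oO$, so there is a constant $C_1 > 0$, depending only on the $C^3$-norm of $w$ on $\overline{\Omega}_\eps$ and on $\Omega$ but not on $\eps$, with
\[
\bigl|\M^\eps w(x) - w(x)\bigr| \le C_1\eps^3\qquad\text{for all }x\in\oO.
\]

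\textbf{The supermartingale part (i).} For an arbitrary Borel control $\sigma$ and $x_{k-1}\in\oO$, Lemma \ref{key} gives
\[
\mathbb{E}^{x_0}_\sigma\bigl[w(x_k)\mid\mathcal{F}^{x_0}_{k-1}\bigr](x_{k-1}) = \M^\eps_{\sigma(x_{k-1})}w(x_{k-1}) \le \M^\eps w(x_{k-1}) \le w(x_{k-1}) + C_1\eps^3,
\]
using $\M^\eps = \sup_{|\xi|=1}\M^\eps_\xi$ for the first inequality and the bound above for the second. If instead $x_{k-1}\in\Gamma_\eps$ the process has stopped and $x_k = x_{k-1}$, so the conditional expectation equals $w(x_{k-1})\le w(x_{k-1}) + C_1\eps^3$ as well (here one should either work with the stopped process $x_{k\wedge\tau_\sigma}$ or simply note that $\gamma[x] = \delta_x$ on $\Gamma_\eps$). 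Subtracting $C_1 k\eps^3$ from both sides,
\[
\mathbb{E}^{x_0}_\sigma\bigl[M_k\mid\mathcal{F}^{x_0}_{k-1}\bigr] \le w(x_{k-1}) + C_1\eps^3 - C_1 k\eps^3 = w(x_{k-1}) - C_1(k-1)\eps^3 = M_{k-1},
\]
so $\{M_k\}_{k\ge 1}$ is a supermartingale (integrability is immediate since $w$ is bounded on $\overline{\Omega}_\eps$).

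\textbf{The submartingale part (ii).} For the optimal control $\sigma_0$ selected via the Dubins--Savage theorem we have the \emph{equality} $\M^\eps w(x) = \M^\eps_{\sigma_0(x)}w(x)$, hence from Lemma \ref{key}, for $x_{k-1}\in\oO$,
\[
\mathbb{E}^{x_0}_{\sigma_0}\bigl[w(x_k)\mid\mathcal{F}^{x_0}_{k-1}\bigr](x_{k-1}) = \M^\eps_{\sigma_0(x_{k-1})}w(x_{k-1}) = \M^\eps w(x_{k-1}) \ge w(x_{k-1}) - C_1\eps^3,
\]
and on $\Gamma_\eps$ the left side is again $w(x_{k-1})\ge w(x_{k-1}) - C_1\eps^3$. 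Adding $C_1 k\eps^3$ gives
\[
\mathbb{E}^{x_0}_{\sigma_0}\bigl[N_k\mid\mathcal{F}^{x_0}_{k-1}\bigr] \ge w(x_{k-1}) - C_1\eps^3 + C_1 k\eps^3 = w(x_{k-1}) + C_1(k-1)\eps^3 = N_{k-1},
\]
so $\{N_k\}_{k\ge 1}$ is a submartingale.

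\textbf{Main obstacle.} The computation itself is routine once \eqref{uniformexpansion} is in hand; the one point deserving care is the uniformity and the source of the constant $C_1$. It relies on the Taylor expansion behind \eqref{smoothexpansion} having error $O(\eps)$ \emph{uniformly} over $x\in\overline{\Omega}$ — multiplied by the $\eps^2$ scaling this becomes $O(\eps^3)$ — which is exactly where the hypothesis $w\in C^3(\Omega_\eps)$ (so that the third derivatives are bounded on the compact set $\overline{\Omega}_\eps$) enters, and is why $C_1$ depends on $w$ and $\Omega$ but not on $\eps$. A minor technical point is handling steps after the process has already reached $\Gamma_\eps$; this is dealt with cleanly by passing to the stopped sequences $x_{k\wedge\tau_{\sigma}}$, on which the same one-step inequalities hold trivially at the absorbed states.
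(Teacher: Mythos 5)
Your proof is correct and follows essentially the same route as the paper's: apply Lemma \ref{key} to identify the one-step conditional expectation with $\M^\eps_{\sigma(x_{k-1})}w(x_{k-1})$, bound it via $\M^\eps_\xi\le\M^\eps$ and the uniform expansion \eqref{uniformexpansion}, and use the Dubins--Savage equality for $\sigma_0$ in the submartingale direction. Your extra remarks on the absorbed states in $\Gamma_\eps$ and on the uniformity of $C_1$ are sensible refinements of details the paper leaves implicit.
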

\begin{proof}
We choose $C_{1}$ given by \eqref{uniformexpansion} and calculate:
\begin{align*}
\mathbb{E}^{x_{0}}_{\sigma}\left[ M_{k}\,|\, \mathcal{F}^{x_{0}}_{k-1}
\right]
	&= \mathbb{E}^{x_{0}}_{\sigma}\left[ w(x_k)\,|\, \mathcal{F}^{x_{0}}_{k-1}
\right]- C_1 k \epsilon^3\\
	&= \M^\eps_{\sigma(x_{k-1})}w(x_{k-1}) - C_1 k \epsilon^3\\
	&\le \M^\eps w(x_{k-1}) - C_1 k \epsilon^3\\
	&\le w(x_{k-1})+ C_{1}\epsilon^{3}- C_1 k \epsilon^3\\
	&= w(x_{k-1})- C_{1}(k-1)\epsilon^{3}\\
	&= M_{k-1}.
\end{align*}
\begin{align*}
\mathbb{E}^{x_{0}}_{\sigma_{0}}\left[ N_{k}\,|\, \mathcal{F}^{x_{0}}_{k-1}
\right]
&= \mathbb{E}^{x_{0}}_{\sigma_{0}}\left[ w(x_k)\,|\, \mathcal{F}^{x_{0}}_{k-1}
\right] + C_1 k \epsilon^3\\
&= \M^\eps_{\sigma_{0}(x_{k-1})}w(x_{k-1}) + C_1 k \epsilon^3\\
&= \M^\eps w(x_{k-1}) + C_1 k \epsilon^3\\
&\ge w(x_{k-1}) - C_{1}\epsilon^{3} + C_1 k \epsilon^3\\
&= w(x_{k-1}) + C_{1}(k-1)\epsilon^{3}\\
&= N_{k-1}.
\end{align*}
\end{proof}

Let $w_\eps$ be the mean value solution with boundary values equal to $w$. That is, $\M^\eps w_\eps = w_\eps$ in $\oO$ and $w_\eps = w$ on $\Gamma_\eps$.

\begin{corollary}\label{smoothcase}
There exists a constant $C_{2}>0$ depending on $w$ and $\Omega$ but independent of
$\epsilon$ such that  for all $x\in\overline{\Omega}$ we have
$$|w(x)-w_\epsilon(x)|\le C_2 \epsilon$$
\end{corollary}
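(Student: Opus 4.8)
The plan is to use the super/sub-martingale structure from Lemma \ref{subsuper} together with Doob's optional stopping theorem and the expected-exit-time bound \eqref{taubound}. First I would fix $x_0\in\oO$ and run the process from $x_0$. For the \emph{upper} bound on $w_\eps(x_0) - w(x_0)$: since $M_k = w(x_k) - C_1 k\eps^3$ is a supermartingale for \emph{every} control $\sigma$, optional stopping at $\tau_\sigma\wedge n$ gives $\mathbb{E}^{x_0}_\sigma[w(x_{\tau_\sigma\wedge n})] - C_1\eps^3\mathbb{E}^{x_0}_\sigma[\tau_\sigma\wedge n] \le w(x_0)$. Letting $n\to\infty$ (using boundedness of $w$ on $\Omega_\eps$, dominated convergence, and \eqref{processends}) and invoking \eqref{taubound} in the form $\eps^2\mathbb{E}^{x_0}_\sigma[\tau_\sigma]\le C(N,\Omega)$, we get $\mathbb{E}^{x_0}_\sigma[w(x_{\tau_\sigma})] \le w(x_0) + C_1 C(N,\Omega)\eps$. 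Since $w = w_\eps$ on $\Gamma_\eps$ and $x_{\tau_\sigma}\in\Gamma_\eps$, taking the supremum over $\sigma$ and using Theorem \ref{meanvalue=stochastic} (so that $w_\eps(x_0) = u_\eps(x_0) = \sup_\sigma\mathbb{E}^{x_0}_\sigma[w(x_{\tau_\sigma})]$, with $w$ as boundary data) yields $w_\eps(x_0) \le w(x_0) + C_2\eps$ with $C_2 := C_1 C(N,\Omega)$.

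For the \emph{lower} bound I would use part (ii): for the optimal control $\sigma_0$ (selected so that $\M^\eps w = \M^\eps_{\sigma_0}w$ pointwise), the process $N_k = w(x_k) + C_1 k\eps^3$ is a submartingale. Optional stopping at $\tau_{\sigma_0}\wedge n$ and letting $n\to\infty$ exactly as above gives $\mathbb{E}^{x_0}_{\sigma_0}[w(x_{\tau_{\sigma_0}})] \ge w(x_0) - C_1 C(N,\Omega)\eps$. Since $w_\eps(x_0) = \sup_\sigma\mathbb{E}^{x_0}_\sigma[w(x_{\tau_\sigma})] \ge \mathbb{E}^{x_0}_{\sigma_0}[w(x_{\tau_{\sigma_0}})]$, this gives $w_\eps(x_0) \ge w(x_0) - C_2\eps$. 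Combining the two bounds yields $|w(x_0) - w_\eps(x_0)| \le C_2\eps$ for all $x_0\in\oO$, as claimed. (For $x_0\in\Gamma_\eps$ the two functions agree and there is nothing to prove, but the statement is for $x\in\oO$.)

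The main technical point to be careful about — and the step I expect to require the most attention — is the passage to the limit $n\to\infty$ in the optional stopping identities: one must justify that $\mathbb{E}^{x_0}_\sigma[w(x_{\tau_\sigma\wedge n})] \to \mathbb{E}^{x_0}_\sigma[w(x_{\tau_\sigma})]$ and that $\mathbb{E}^{x_0}_\sigma[\tau_\sigma\wedge n]\to\mathbb{E}^{x_0}_\sigma[\tau_\sigma] < \infty$. The first follows from $w\in L^\infty(\Omega_\eps)$, $\tau_\sigma<\infty$ a.s. by \eqref{processends}, and dominated convergence; the second is monotone convergence combined with the uniform bound \eqref{taubound}. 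A secondary subtlety is that the stopped process $M_{k\wedge\tau_\sigma}$ is genuinely a supermartingale (so that its expectation is nonincreasing in $k$) — this is immediate since $\tau_\sigma$ is a stopping time for the filtration $\{\mathcal F^{x_0}_n\}$ and optional stopping for bounded stopping times preserves the (super/sub)martingale inequality. Everything else is the bookkeeping of assembling the constant $C_2$ from $C_1$ (coming from the $O(\eps^3)$ term in \eqref{uniformexpansion}) and $C(N,\Omega)$ from \eqref{taubound}.
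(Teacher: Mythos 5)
Your proposal is correct and follows essentially the same route as the paper's (probabilistic) proof: optional stopping of the supermartingale $M_k$ for arbitrary $\sigma$ and of the submartingale $N_k$ for the optimal $\sigma_0$, combined with the identification $w_\eps=\sup_\sigma\mathbb{E}^{x_0}_\sigma[w(x_{\tau_\sigma})]$ from Theorem \ref{meanvalue=stochastic} and the exit-time bound \eqref{taubound}; you merely make explicit the passage $n\to\infty$ that the paper leaves implicit. (The paper also offers a second, purely analytic proof via the comparison principle and a paraboloid correction, but your argument matches the first one.)
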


\begin{proof} 
From Theorem  \ref{meanvalue=stochastic} and Lemma \ref{subsuper} (i) we have
\begin{align*}
w_\epsilon(x_0)
	&= \sup_\sigma\left(\mathbb{E}_\sigma^{x_0}[w(x_{\tau_\sigma})]\right)\\
	&= \sup_\sigma\left(\mathbb{E}_\sigma^{x_0}[w(x_{\tau_\sigma})-C_1\tau_\sigma \epsilon^3+
C_1\tau_\sigma \epsilon^3]\right)\\
	&\le \sup_\sigma\left(\mathbb{E}_\sigma^{x_0}[w(x_{\tau_\sigma})-C_1\tau_\sigma \epsilon^3]\right)+\sup_\sigma\left(
\mathbb{E}_\sigma^{x_0}[C_1\tau_\sigma \epsilon^3]\right)\\
	&\le w(x_0) + C_1 \epsilon^3 \sup_\sigma\left(\mathbb{E}_\sigma^{x_0}[\tau_\sigma ]\right),
\end{align*}
and from Lemma \ref{subsuper} (ii) we have
\begin{align*}
w_\epsilon(x_0)
	&= \sup_\sigma\left(\mathbb{E}_\sigma^{x_0}[w(x_{\tau_\sigma})]\right)\\
	&\ge \left(\mathbb{E}_{\sigma_0}^{x_0}[w(x_{\tau_\sigma})+C_1\tau_\sigma \epsilon^3-C_1\tau_\sigma \epsilon^3]\right)\\
	&= \mathbb{E}_{\sigma_0}^{x_0}[w(x_{\tau_{\sigma_0}})+C_1\tau_{\sigma_0} \epsilon^3]-\mathbb{E}_{\sigma_0}^{x_0}[C_1\tau_{\sigma_0} \epsilon^3]\\
	&\ge w(x_0) - C_1 \epsilon^3 \sup_\sigma\left(\mathbb{E}_\sigma^{x_0}[\tau_\sigma ]\right).
\end{align*}
Therefore,
$$|w(x)-w_\epsilon(x)|\le C_1 \epsilon^3 \sup_\sigma\left(
\mathbb{E}_\sigma^{x_0}[\tau_\sigma ]\right)\le C_{1}C(\Omega, N)\, \epsilon$$
by the stopping time bound (\ref{taubound}).
\end{proof}

We also give an alternative simpler proof of Corollary \ref{smoothcase} that do not rely on  Lemma \ref{subsuper} nor use any selection theorems. 

\begin{proof}[Analytic proof of Corollary \ref{smoothcase}]
Fix $\eps'>0$ and choose a ball with radius $R_\Omega>0$ and centre $x_*$ so that $\Omega_{\eps'}\subseteq B_{R_\Omega}(x_*)$. Let $\phi$ be the paraboloid $\phi(x) := \frac{C_1}{c_{N,p}}\vert x-x_*\vert^2$, and for $0<\eps\leq\eps'$ define
\[h_\eps(x) := w(x) - w_\eps(x) + \eps\phi(x).\]
Here, $c_{N,p} = \frac{N+p-2}{N+p}$ and $C_1>0$ is such that $\vert \M^\eps w - w\vert\leq C_1\eps^3$ in $\oO$. Then
\[\M^\eps\phi(x) = \frac{\eps^2}{C_{N,p}}\D_p\phi +\phi(x) = C_1\eps^2 +\phi(x),\]
so
\begin{align*}
\M^\eps h_\eps &= \M^\eps[w-w_\eps] + \eps\M^\eps\phi\\
&\geq \M^\eps w - \M^\eps w_\eps + C_1\eps^3 + \eps\phi\\
&\geq w - C_1\eps^3 - w_\eps + C_1\eps^3 + \eps\phi\\
&= h_\eps,
\end{align*}
and hence $h_\eps$ is an $\eps$-mean value subsolution. Use $\phi\geq 0$ and the maximum principle to obtain that $w - w_\eps \leq h_\eps$ and $h_\eps\vert_{\Gamma_\eps} = \eps\phi\vert_{\Gamma_\eps} \leq \frac{C_1}{c_{N,p}}R_\Omega^2 \eps$. The same analysis works for the function $g_\eps := w_\eps - w + \eps\phi$, and thus
\[\vert w(x) - w_\eps(x)\vert \leq \frac{C_1}{c_{N,p}}R_\Omega^2\eps.\]
\end{proof}

Next, we adapt the argument used in  \cite{MPR12}  for $p$-harmonious functions.
First, we construct  upper barriers. Consider the ring domain $B_{R}(x_{0})\setminus \overline{B_{r}(x_{0})}$ and assign boundary values
$m$ on the inner boundary $|x-x_{0}|=r$ and $M$ on the outer boundary $|x-x_{0}|=R$ satisfying $m\le M$. 
Set $b=-(N+p-4)$. If $b=0$, then we  must have $N=p=2$ since $N\ge2$ and $p\ge2$.  In this case, we define
\begin{equation}\label{b=0}
U(x)=\frac{M-m}{\log(R/r)} \log\left(\frac{|x-x_{0}|}{r}\right)+m.
\end{equation}
When $b<0$ we set instead
\begin{equation}\label{b<0}
U(x)=\frac{M-m}{R^{b}-r^{b}} \left(|x-x_{0}|^{b}-{r}^{b}\right)+m.
\end{equation}
In each case we have $\D_{p}U=0$ in $B_{R}(x_{0})\setminus \overline{B_{r}(x_{0})}$ with boundary values
$m$ on the inner boundary $|x-x_{0}|=r$ and $M$ on the outer boundary $|x-x_{0}|=R$.
\par
Since $\Omega$ is Lipschitz, it is clear that $\Omega$ satisfies the following regularity condition:
\begin{equation*}
\begin{split}
&\text{There exists} \,  \bar{\delta}>0 \, \text{and} \,  \mu\in(0,1) \,  \text{such that for every} \,  \delta\in(0,\bar{\delta}) \,  \text{and} \,  y\in\partial\Omega \, \\
&\text{there exists a ball}  \, B_{\mu\delta}(z) \,  \text{strictly contained in} \, B_\delta(y)\setminus\Omega.
\end{split}
\end{equation*}
Let $u_{\epsilon}$  be as in Lemma \ref{boundaryestimate}.
Fix $\delta\in(0,\bar{\delta})$.
For $y\in\partial\Omega$ consider: 
\begin{equation}\label{Beps}
m^\eps(y) :=\sup_{B_{5\delta}(y)\cap \Gamma^\eps}F  \quad
\text{and}\quad 
M^\eps :=\sup_{ \Gamma^\eps}F.
\end{equation}
Let $\theta\in (0,1)$ depending only on $\mu$, $N$ and $p$ to be determined later. Set
 $\delta_{k}=\delta/4^{k-1}$ for $k\ge 0$ and  define
\begin{equation}\label{Meps}
M_k^\eps(y)=m^\eps(y)+\theta^k(M^\eps-m^\eps(y)).
\end{equation}
By the regularity assumption on $\Omega$, there exist balls $B_{\mu \delta_{k+1}}(z_k)$ contained in $B_{\delta_{k+1}}(y)\setminus \Omega$ for all $k\in\mathbb{N}$. Note that $\mu$ is independent of $k$ and $\delta$.
The iteration lemma is the following:
\begin{lemma}\label{Lemma induction k} There exists  $\theta\in (0,1)$ depending only on $\mu$, $N$ and $p$ such that the following holds: 
Fix $\eta>0$ and let $y\in\partial\Omega$ and $\eps_k>0$. Under the above notations, suppose that for all $\eps<\eps_k$ we have:
\begin{equation*}
u_{\epsilon}\leq M_k^\eps(y) \quad \text{in}\quad B_{\delta_k}(y)\cap \Omega. 
\end{equation*}
Then, either $M_k^\eps(y)-m^\eps(y)\leq \frac{\eta}{4}$ or there exists $\eps_{k+1}=\eps_{k+1}(\eta, \mu,\delta,N,p,G)\in(0,\eps_k)$  such that:
\begin{equation*}
u_\eps \leq M_{k+1}^\eps(y) \quad \text{in}\quad B_{\delta_{k+1}}(y)\cap \Omega
\end{equation*}
for all $\eps\leq \eps_{k+1}$. 
\end{lemma}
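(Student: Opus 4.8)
The plan is to insert the barrier $U$ from \eqref{b=0}–\eqref{b<0} into the ring domain centred at the point $z_k$ produced by the regularity assumption, compare $u_\eps$ against it by the $\eps$-mean value comparison principle (Lemma \ref{comparisonprinciple}), and show that one step of the iteration shrinks the oscillation by a fixed factor $\theta<1$. First I would set up the geometry: the ball $B_{\mu\delta_{k+1}}(z_k)$ lies in $B_{\delta_{k+1}}(y)\setminus\Omega$, so its \emph{outside} provides room for a ring. Take inner radius $r = \mu\delta_{k+1}$ and outer radius $R$ chosen so that $B_R(z_k)$ comfortably contains $B_{\delta_k}(y)$ — since $|y-z_k|<\delta_{k+1}=\delta_k/4$, one may take, say, $R = 2\delta_k$ (and $\bar\delta$ small enough that $B_R(z_k)\subset\Omega_{\eps'}$-type containment and $b$-exponents behave). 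Assign inner boundary value $m := M_k^\eps(y)$ and outer boundary value $M := M^\eps$; note $M \geq M_k^\eps(y) = m$ because $\theta^k\leq 1$, so the barrier is legitimately increasing in the radial variable and $\D_p U = 0$ in the ring.

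Next I would bound $u_\eps$ above by $U$ on $\Omega_\eps$ intersected with the ring-plus-exterior region. On the inner sphere $|x-z_k| = r$ (more precisely on the $\eps$-collar around $\overline{B_r(z_k)}$, which lies outside $\Omega$), $u_\eps = F \leq M^\eps$, but we need the sharper bound there: actually on the annular region $B_{\delta_k}(y)\setminus\Omega$ the hypothesis $u_\eps\leq M_k^\eps(y)$ on $B_{\delta_k}(y)\cap\Omega$ together with $u_\eps = F$ on $\Gamma_\eps$ and the estimate $F\leq M^\eps$ globally lets us dominate $u_\eps$ by the boundary data of the barrier, using $m^\eps(y) = \sup_{B_{5\delta}(y)\cap\Gamma^\eps}F$ to control $F$ near $y$ once $\delta_k\leq 5\delta$. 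By Corollary \ref{smoothcase} applied to $w = U$ (which is $C^3$ away from its centre, hence in $C^3$ of a neighbourhood of the closed ring since $r>0$), the $\eps$-mean value solution $U_\eps$ with boundary data $U$ satisfies $|U - U_\eps|\leq C_2\eps$ with $C_2 = C_2(\mu,\delta,N,p)$; then Lemma \ref{comparisonprinciple} gives $u_\eps \leq U_\eps \leq U + C_2\eps$ throughout $B_{\delta_k}(y)\cap\Omega$, provided the comparison is valid, i.e. provided $\eps$ is small enough that the $\eps$-collar of the ring domain still sits inside the region where we have pointwise domination of the boundary data. This fixes a first constraint $\eps < \eps_{k+1}^{(1)}$.

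Now evaluate $U$ on the smaller ball $B_{\delta_{k+1}}(y)$. Every $x$ there satisfies $|x - z_k|\leq |x-y| + |y-z_k| < 2\delta_{k+1}$, and also $|x-z_k|\geq r = \mu\delta_{k+1}$ is \emph{not} automatic — points of $B_{\delta_{k+1}}(y)$ can lie inside $B_r(z_k)$, but those points are outside $\Omega$, so on $B_{\delta_{k+1}}(y)\cap\Omega$ we only care where $U$ is defined, and for the radial profile the relevant estimate is monotonicity: $U(x)\leq U$ evaluated at radius $2\delta_{k+1}$. A direct computation with \eqref{b<0} (and \eqref{b=0} when $N=p=2$) shows
\[
U\big|_{|x-z_k| = 2\delta_{k+1}} - m \;=\; (M-m)\,\frac{(2\delta_{k+1})^b - r^b}{R^b - r^b} \;=\; (M^\eps - M_k^\eps(y))\,\theta_0
\]
where $\theta_0 = \theta_0(\mu, N, p)\in(0,1)$ is a fixed number — here one uses $r = \mu\delta_{k+1}$, $2\delta_{k+1}$, and $R = 2\delta_k = 8\delta_{k+1}$, so all three radii are the same fixed multiples of $\delta_{k+1}$ and the ratio depends only on $\mu$, $N$, $p$ (through $b$), \emph{not} on $\delta$ or $k$. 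Since $M^\eps - M_k^\eps(y) = (1-\theta^k)(M^\eps - m^\eps(y))\leq M^\eps - m^\eps(y)$, we get $u_\eps \leq m^\eps(y) + \theta_0(M^\eps - m^\eps(y)) + C_2\eps$ on $B_{\delta_{k+1}}(y)\cap\Omega$. Choose $\theta\in(\theta_0,1)$, e.g. $\theta := (1+\theta_0)/2$; this $\theta$ depends only on $\mu, N, p$ as required. Then on the dichotomy: if $M_k^\eps(y) - m^\eps(y) = \theta^k(M^\eps - m^\eps(y)) \leq \eta/4$ we are in the first alternative and done; otherwise $M^\eps - m^\eps(y) \geq \eta/(4\theta^k) \geq \eta/4$, so we may pick $\eps_{k+1} \leq \eps_{k+1}^{(1)}$ small enough that $C_2\eps \leq (\theta - \theta_0)(M^\eps - m^\eps(y))$, and then $u_\eps \leq m^\eps(y) + \theta(M^\eps - m^\eps(y)) = m^\eps(y) + \theta\cdot\theta^{-k}\cdot\theta^k(M^\eps-m^\eps(y))$; but we need exactly $M_{k+1}^\eps(y) = m^\eps(y) + \theta^{k+1}(M^\eps - m^\eps(y))$, so the bound to carry forward should be stated with $\theta_0(M^\eps - M_k^\eps(y))$ replaced inside the nested structure — concretely, applying the estimate to the already-reduced data $m = M_k^\eps(y)$ yields $u_\eps \leq M_k^\eps(y) + \theta_0(M^\eps - M_k^\eps(y)) + C_2\eps$, and $M_k^\eps(y) + \theta_0(M^\eps - M_k^\eps(y)) = m^\eps(y) + (\theta^k + \theta_0(1-\theta^k)\cdot\theta^{-k}\cdot\theta^k)\cdots$; the clean way is to verify algebraically that $M_k + \theta_0(M^\eps - M_k) \leq M_{k+1}$ fails in general, so instead one takes $\theta := \theta_0$ itself and absorbs the $C_2\eps$ error exactly as above using the second alternative. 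The main obstacle is precisely this bookkeeping — making the geometric decay factor coming from the explicit barrier match the prescribed $M_{k+1}^\eps$ while keeping $\theta$ independent of $\delta$ and $k$ — and the resolution is that all three radii in \eqref{b<0} are fixed multiples of $\delta_{k+1}$, so the barrier's decay ratio $\theta_0$ is genuinely universal; the $C_2\eps$ correction is then swallowed by shrinking $\eps_{k+1}$, using the non-trivial lower bound on $M^\eps - m^\eps(y)$ guaranteed by the failure of the first alternative.
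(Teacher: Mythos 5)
Your overall strategy --- insert the explicit radial barrier \eqref{b<0} in a ring centred at the exterior point $z_k$, transfer from the barrier to its $\eps$-mean value solution via Corollary \ref{smoothcase}, compare with $u_\eps$ by Lemma \ref{comparisonprinciple}, and extract a universal contraction factor from the fact that all radii are fixed multiples of $\delta_{k+1}$ --- is exactly the paper's strategy. But there is a genuine gap in the execution: you assign the barrier the \emph{inner} value $m:=M_k^\eps(y)$ and the \emph{outer} value $M:=M^\eps$. With that assignment the barrier contracts the quantity $M^\eps-M_k^\eps(y)$ \emph{towards} $M_k^\eps(y)$, so the bound you obtain on $B_{\delta_{k+1}}(y)\cap\Omega$ is $M_k^\eps(y)+\theta_0\bigl(M^\eps-M_k^\eps(y)\bigr)+C_2\eps\;\geq\;M_k^\eps(y)$, i.e.\ it is \emph{weaker} than the induction hypothesis and can never produce $M_{k+1}^\eps(y)<M_k^\eps(y)$. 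You notice this yourself ("$M_k+\theta_0(M^\eps-M_k)\leq M_{k+1}$ fails in general"), but the proposed repair --- taking $\theta:=\theta_0$ and absorbing $C_2\eps$ --- does not address it: the problem is not the $O(\eps)$ error but the orientation of the barrier. The correct assignment (as in the paper) is inner value $m^\eps(y)$ and outer value $M_k^\eps(y)$, so that the contracted oscillation is $M_k^\eps(y)-m^\eps(y)=\theta^k(M^\eps-m^\eps(y))$ and one step gives $m^\eps(y)+\theta\cdot\theta^k(M^\eps-m^\eps(y))=M_{k+1}^\eps(y)$ on the nose.

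Fixing the orientation exposes a second issue that your choice $R=2\delta_k$ was implicitly dodging: once the outer value is $M_k^\eps(y)$, the outer sphere of the ring must stay inside $B_{\delta_k}(y)$, where the hypothesis $u_\eps\leq M_k^\eps(y)$ is available; but a ring centred at $z_k$ with outer radius $\delta_k$ is \emph{not} contained in $B_{\delta_k}(y)$. The paper resolves this by running the comparison only on $B_{\delta_k/2}(z_k)\cap\Omega$ and comparing the convex combination $\alpha u_\eps+\beta m$ (with $\alpha,\beta$ as in the proof, $\alpha+\beta=1$) against $U_k^\eps+\gamma$: on the part of the $\eps$-boundary lying in $\Gamma^\eps$ one uses $u_\eps=F\leq m^\eps(y)$, and on the part lying in $\Omega$ near $\partial B_{\delta_k/2}(z_k)$ one uses $u_\eps\leq M_k^\eps(y)$ together with $U_k=\alpha m+\beta M_k$ at radius $\delta_k/2$. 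Some such device is unavoidable; without it the boundary data of the barrier do not dominate $u_\eps$ on the outer boundary of the comparison region. As written, your argument does not yield the contraction $M_k^\eps(y)\to M_{k+1}^\eps(y)$, so the induction does not close.
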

\begin{proof} We will present the case $b<0$. 
Suppose that we are in the case $M_k^\eps(y)-m^\eps(y)>\frac{\eta}{4}$.
For notational convenience set  $m=m^\eps(y)$ and $M_k=M_k^\eps(y)$.
Consider the barrier $U_{k}$ defined on the ring $R_{k}=B_{\delta_k}(z_k)\setminus \overline{B_{\mu\delta_{k+1}}(z_k)} $
\begin{equation*}
U_{k}(x)=\frac{M_{k}-m}{\delta_{k}^{b}-(\mu\delta_{k+1})^{b}} \left(
|x-z_k|^b-(\mu\delta_{k+1})^b
\right)+m.
\end{equation*}
Note that  $U_k$ is increasing in $|x-z_{k}|$ is smooth  and solves the problem:
\begin{equation*}
\begin{cases}
\D_{p} (U_k) = 0 &\quad \text{in}\quad B_{\delta_k}(z_k)\setminus \overline{B_{\mu\delta_{k+1}}(z_k)} \\
U_k = m &\quad \text{on}\quad \partial B_{\mu\delta_{k+1}}(z_k)\\
U_k = M_k &\quad \text{on}\quad \partial B_{\delta_{k}}(z_k).
\end{cases}
\end{equation*}
We will establish several upper bounds for $\eps_{k+1}$, and   take $\eps_{k+1}$ to be the minimum of such bounds. \par

First, let $\eps_{k+1}=\frac{\mu\delta_{k+1}}{2}$.
For $\eps\leq \eps_{k+1}$, extend the barrier $U_k$ to the ring $$R_{k,\eps}= B_{\delta_k+2\eps}(z_k)\setminus \overline{B_{\mu\delta_{k+1}-2\eps}(z_k)}.$$
Let $U^\eps_k$ be $\epsilon$-mean value solution in $R_{k}=B_{\delta_k}(z_k)\setminus\overline{B_{\mu\delta_{k+1}}(z_k)}$ with boundary values $U_k$ on $R_{k,\eps}\setminus R_{k}$, the outer $\eps$-neighborhood of $R_{k}$. Since $R_{k}$ is a smooth domain, by 
Corollary \ref{smoothcase}  we have that $U^\eps_k$ converges to $U_k$ uniformly in $\tilde{X}$ as $\eps\to 0$. 
Hence, given $$\gamma=\frac{(1/2)^{b}-((2-\mu)/4)^{b}}{8}\eta,$$
there exists $\eps_{k+1}=\eps_{k+1}(\gamma)>0$ such that: 
$$|U^\eps_k-U_k|\leq \gamma$$
for $\eps\leq \eps_{k+1}$ and for every $p\in\tilde{X}$.\par
Next, define \begin{equation*}
\alpha = \frac{1-(1/2)^{b}}{1-(\mu/4)^{b}}\quad \text{and}\quad 
\beta = \frac{(1/2)^{b}-(\mu/4)^{b}}{1-(\mu/4)^{b}}
\end{equation*}
and note that $\alpha$ and $\beta$ are non-negative and that $\alpha+\beta=1$. 

We now prove the following claim:
\begin{claim}\label{claim}
$$\alpha v^\eps +\beta m\leq U_k+\gamma \quad \text{in}\quad B_{\delta_k/2}(z_k)\cap\Omega,$$
for $\eps\leq \eps_{k+1}$. 
\end{claim}
From the comparison principle (Lemma \ref{comparisonprinciple}) we get  $$\partial_\eps(B_{\delta_k/2}(z_k)\cap\Omega)\subseteq\Gamma_1^\eps\cup \Gamma_2^\eps,$$
where  $\Gamma_1^\eps=B_{\delta_k/2+\eps}(z_k)\cap\Gamma^\eps$ and $\Gamma_2^\eps=(B_{\delta_k/2+\eps}(z_k)\setminus \overline{B_{\delta_k/2}(z_k)})\cap\Omega$.\par
On $\Gamma_1^\eps$, we have $u^\eps=F\leq m$, hence:
$\alpha u^\eps+\beta m\leq m=\inf_{R_{k}} U_k\leq U_k\leq U_k^\eps+\gamma,$
since $\Gamma_1^\eps\subset R_{k,\eps}$.\par
On $\Gamma_2^\eps$, we have $v^\eps\leq M_k$ by assumption, because $B_{\delta_k/2+\eps}(z_k)\subset B_{\delta_k}(y)$. 
For $x\in\partial B_{\delta_k/2}(z_k)$, we have $|x-z_{k}|=\delta_k/2$, hence:
\begin{equation}
\begin{split}
U_k(x)& = \frac{M_{k}-m}{\delta_{k}^{b}-(\mu\delta_{k+1})^{b}} \left(
|\delta_{k}/2|^b-(\mu\delta_{k+1})^b
\right)+m
\\
& =  \frac{M_{k}-m}{1-(\mu/4)^{b}} \left(
(1/2)^b-(\mu/4)^b\right)+m
\\& =  \frac{1-(1/2)^{b}}{1-(\mu/4)^{b}} m + \frac{(1/2)^{b}-(\mu/4)^{b}}{1-(\mu/4)^{b}}M_{k}
\\& =  \alpha m + \beta M_{k}
\end{split}
\end{equation}
and by monotonicity
$U_k\geq \alpha m+\beta M_{k}$ in $\Gamma_2^\eps$.
Hence:
\begin{equation*}\label{boundary2}
\alpha m +\beta v_{\eps}\leq \alpha m +\beta M_{k}\leq U_k\leq U_k^\eps+\gamma
\end{equation*}
in $\Gamma_2^\eps$.
In conclusion, we have: 
$\alpha m +\beta v_{\eps}\leq U_k^\eps+\gamma \quad\text{in}\quad \partial_\eps(B_{\delta_k/2}(z_k)\cap\Omega),$
and the claim follows again by the comparison principle Lemma \ref{comparisonprinciple}.

\medskip

Consider next the intersection  $B_{\delta_{k+1}}(y)\cap\Omega$. We have $B_{\delta_{k+1}}(y) \subset B_{(2-\mu)\delta_{k+1}}(z_k)$ and for $x\in B_{(2-\mu)\delta_{k+1}}(z_k)$ we have:
\begin{equation}\label{improved0}
\begin{split}
U_k(x)&\leq  \frac{M_{k}-m}{\delta_{k}^{b}-(\mu\delta_{k+1})^{b}} \left(
((2-\mu)\delta_{k+1})^b-(\mu\delta_{k+1})^b
\right)+m\\
&=  \frac{M_{k}-m}{1-(\mu/4)^{b}} \left(
((2-\mu)/4)^b-(\mu/4)^b
\right)+m\\
&=\alpha' m+\beta'  M_{k},
\end{split}
\end{equation}
where we have set
$$  \alpha' = \frac{1-((2-\mu)/4)^b}{1-(\mu/4)^b}  \quad \text{and}\quad \beta' = \frac{(2-\mu)/4)^b-(\mu/4)^b}{1-(\mu/4)^b} .$$
Also, note that 
$B_{\delta_{k+1}}(y)\subset B_{\delta_k/2}(z_k)$,
hence by \eqref{claim} we get:
\begin{equation}\label{improved}
\alpha m+\beta v^{\eps}\leq U_k+\gamma \quad\text{in}\quad B_{\delta_{k+1}}(y)\cap\Omega.
\end{equation}  
Combining \eqref{improved0} and \eqref{improved}, for $p\in B_{\delta_{k+1}}(y)\cap\Omega$ and $\eps<\eps_{k+1}$, we get:
\begin{equation*}
\begin{split}
v^\eps(p)&\leq 
\frac{\alpha ' -\alpha}{\beta}m+\frac{\beta '}{\beta}M_k+\frac{\gamma}{\beta}\\
&= m+\frac{\beta '}{\beta}(M_k-m)+\frac{\gamma}{\beta}.
\end{split}
\end{equation*}
Observe that $\beta'/\beta\in (0,1)$ and that $\beta'<\beta$. Recall that we have chosen  
$$\gamma=\frac{(1/2)^{b}-((2-\mu)/4)^{b}}{8}\eta \le  \frac{(1/2)^{b}-((2-\mu)/4)^{b}}{2} (M_{k}-m).$$
Thus, we get 
\begin{equation*}
\begin{split}
v^\eps(p)
&\le  m+\left(\frac{\beta' }{\beta}+\frac{(1/2)^{b}-((2-\mu)/4)^{b}}{2\beta}\right)(M_k-m), 
\end{split}
\end{equation*}
and setting 
\begin{equation}\label{theta}
\theta= \frac{\beta' }{\beta}+\frac{(1/2)^{b}-((2-\mu)/4)^{b}}{2\beta}
\end{equation} we get
$$v^\eps(p)
\le  m+\theta(M_k-m)\le m +\theta^{k+1}(M^{\epsilon}-m).$$
\end{proof}




The next Corollary, whose proof follows in a standard way from Lemma \ref{Lemma induction k},   implies one half of Lemma \ref{boundaryestimate}.\par
\begin{corollary}\label{TheoremBoundary}
Given $\eta>0$, there exist $\delta=\delta(\eta, F, \bar{\delta})$, $k_0=k_0(\eta, \mu, p, F)$, $\eps_0=\eps_0(\eta, \delta,\mu, k_0)$ such that:
\begin{equation*}
u^\eps(x)-F(y)\leq \frac{\eta}{2},
\end{equation*}
for all $y\in\partial\Omega$, $x\in B_{\delta/4^{k_0}}(y)\cap\overline{\Omega}$ and $\eps\leq\eps_0$.
\end{corollary}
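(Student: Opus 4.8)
The plan is to iterate Lemma \ref{Lemma induction k} starting from a trivial base case and run it until the gap $M_k^\eps(y) - m^\eps(y)$ has shrunk below $\eta/4$, while simultaneously keeping track of how the radii $\delta_k = \delta/4^{k-1}$ and the thresholds $\eps_k$ decrease. First I would fix $\eta>0$ and, using the Lipschitz continuity of $F$ and the fact that $F$ has been extended to $\Gamma_\eps$ with the same Lipschitz constant $L$, choose $\delta = \delta(\eta, F, \bar\delta)\in(0,\bar\delta)$ small enough that $\sup_{B_{5\delta}(y)\cap\Gamma^\eps}F - F(y)\le \eta/4$ for every $y\in\partial\Omega$; this controls the difference between $m^\eps(y)$ and $F(y)$. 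For the base case $k=0$ (or $k=1$), the hypothesis $u_\eps \le M_0^\eps(y) = M^\eps$ in $B_{\delta_0}(y)\cap\Omega$ holds for all $\eps$ by the trivial bound $u_\eps \le M_F = M^\eps$ (each run of the process exits through $\Gamma_\eps$, where $u_\eps = F \le M^\eps$), so there is nothing to check and we may take $\eps_0$ to be anything.

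Next I would run the induction. Applying Lemma \ref{Lemma induction k} repeatedly produces a decreasing sequence $\eps_0 > \eps_1 > \eps_2 > \cdots$ and the implication: as long as $M_k^\eps(y) - m^\eps(y) > \eta/4$, we get $u_\eps \le M_{k+1}^\eps(y)$ in $B_{\delta_{k+1}}(y)\cap\Omega$ for $\eps\le\eps_{k+1}$. Since $M_k^\eps(y) - m^\eps(y) = \theta^k(M^\eps - m^\eps(y)) \le \theta^k(M_F - m_F)$ and $\theta\in(0,1)$ is a fixed constant depending only on $\mu, N, p$, there is an integer $k_0 = k_0(\eta,\mu,p,F)$, independent of $y$ and $\eps$, such that $\theta^{k_0}(M_F - m_F) \le \eta/4$. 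Then the induction either terminates early at some $k\le k_0$ because the gap already dropped below $\eta/4$ — in which case $u_\eps \le M_k^\eps(y) \le m^\eps(y) + \eta/4$ — or it runs the full $k_0$ steps and yields $u_\eps \le M_{k_0}^\eps(y) = m^\eps(y) + \theta^{k_0}(M^\eps - m^\eps(y)) \le m^\eps(y) + \eta/4$ in $B_{\delta_{k_0}}(y)\cap\Omega$, valid for all $\eps \le \eps_0 := \eps_{k_0}$, which depends only on $\eta, \delta, \mu, k_0$. In either case, combining with $m^\eps(y) \le F(y) + \eta/4$ gives
\[
u_\eps(x) - F(y) \le \frac{\eta}{4} + \frac{\eta}{4} = \frac{\eta}{2}
\]
for $x\in B_{\delta/4^{k_0}}(y)\cap\overline\Omega$ and $\eps\le\eps_0$. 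The extension from $B_{\delta_{k_0}}(y)\cap\Omega$ to the closure $B_{\delta/4^{k_0}}(y)\cap\overline\Omega$ is handled by noting $\delta/4^{k_0} < \delta_{k_0}$ and using lower semicontinuity of $u_\eps$ (Lemma \ref{existenceanduqineness}) together with the already-established bound $u_\eps = F \le m^\eps(y) + \eta/4$ on the part of $\Gamma_\eps$ inside the ball, or simply by a slightly smaller radius.

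The main obstacle — already absorbed into Lemma \ref{Lemma induction k}, but worth flagging — is the \emph{uniformity} of all constants: $\theta$ must not depend on $k$ (hence not on the scale $\delta_k$), which is why the geometric regularity of the Lipschitz domain is stated scale-invariantly via the single ratio $\mu$, and why $k_0$ can be chosen depending only on $\eta,\mu,p,F$. The delicate bookkeeping point in assembling the corollary is that $\eps_{k+1}$ in Lemma \ref{Lemma induction k} depends on $\eta,\mu,\delta,N,p,G$ but \emph{not} on $k$ beyond finitely many applications, so after exactly $k_0$ steps the final $\eps_0$ is a well-defined positive number; one must check that the finitely many bounds on $\eps_{k+1}$ produced inside the proof of Lemma \ref{Lemma induction k} (from $\eps_{k+1} = \mu\delta_{k+1}/2$ and from the convergence rate in Corollary \ref{smoothcase} applied on the ring $R_k$) have a positive infimum over $k \le k_0$. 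This is immediate since it is a minimum of finitely many positive quantities. The symmetric lower bound $F(y) - u_\eps(x) \le \eta/2$, giving the full Lemma \ref{boundaryestimate} with $\eta$ replaced by $\eta/2$ and then relabeled, follows by the same argument applied to $-F$ using lower barriers in place of upper barriers, which is why the corollary is stated as "one half of Lemma \ref{boundaryestimate}".
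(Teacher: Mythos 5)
Your argument is correct and is exactly the ``standard'' iteration the paper has in mind but does not write out: a trivial base case $u_\eps\le M^\eps=M_0^\eps(y)$, finitely many applications of Lemma \ref{Lemma induction k} until $\theta^{k_0}\operatorname{osc}F\le\eta/4$, a choice of $\delta$ via the Lipschitz bound so that $m^\eps(y)\le F(y)+\eta/4$, and $\eps_0$ taken as the minimum of the finitely many thresholds $\eps_1,\dots,\eps_{k_0}$. Your closing remarks on the uniformity of $\theta$ in $k$ and on passing from $B_{\delta/4^{k_0}}(y)\cap\Omega$ to the closure via lower semicontinuity of $u_\eps$ are the right points to flag, and both are handled correctly.
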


To prove the (easier) lower bounds we observe that if $v$ is a harmonic function, then it is also a subsolution of the equation $\D_{p}u(x)  = 0$. Thererefore if $u$ and $v$ agree on the boundary of a domain, we must have $v\le u$ in the domain. Thus, lower bounds for $u$ follow from lower bounds for $v$.  This suggest building barriers using the fundamental solution of the Laplacian. Repeating the argument of the proof of Lemma \ref{Lemma induction k} 
with minima in lieu of maxima  and using the fundamental solution of the Laplacian as barriers, we get the the analogue of Lemma \ref{Lemma induction k}  for lower bounds, and the other half of Lemma \ref{boundaryestimate}.
\subsubsection{Proof of Lemma \ref{viscosity}}\label{proofof27}
Let us prove that $\overline{u}$ is a viscosity subsolution; that is, it satisfies $\D_{p} \overline{u} \ge 0$ in the viscosity sense. Let $x_{0}\in \Omega$ and choose $\phi\in C^{2}(\Omega)$ such that \textit{$\phi$ touches $\overline{u}$ from above at $x_{0}$}; i.e. we have $\overline{u}(x_{0})=\phi(x_{0})$ and
 $\overline{u}(x)< \phi(x)$ for  $x\in\Omega\setminus\{x_{0}\}$.
 The following proposition  is standard (see Lemma 4.2 in \cite{B94} and the Mathoverflow discussion\cite{MOF16}). We include the proof  for completeness. 
\begin{propo}\label{limitsofmaxima} Suppose that $\overline{B(x_{0},r)}\subset \Omega$. Then, there exists a sequence of numbers $\epsilon_{n}\to0$ and a sequence of  points $y_{n}\to x_{0}$ such that
 $u_{\epsilon_{n}}(y_{n})\to \overline{u}(x_{0})$ and  the function  $\phi- u_{\epsilon_{n}}$ has an approximate minimum in   $\overline{B(x_0,r)}$ at the point $y_{n}$; that is, we have:
 \begin{equation}\label{approxmin}
 \phi(y_{n})- u_{\epsilon_{n}}(y_{n}) \le \phi(y)- u_{\epsilon_{n}}(y) + \epsilon_{n}^{3}
 \end{equation}
 for all $y\in B(x_{0},r)$
  \end{propo}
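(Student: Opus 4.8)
The plan is the standard half-relaxed-limits compactness argument, with one twist dictated by the fact that the discretizations are only lower semicontinuous (Lemma~\ref{existenceanduqineness} together with Theorem~\ref{meanvalue=stochastic}): the function $\phi-u_{\epsilon}$ is only \emph{upper} semicontinuous on the compact set $\overline{B(x_{0},r)}$, so its infimum there need not be attained, and this is precisely why the tolerance $\epsilon_{n}^{3}$ must appear in \eqref{approxmin}. Concretely, for every $\epsilon>0$ small enough that $\overline{B(x_{0},r)}\subset\Omega\subset\Omega_{\epsilon}$, put $m_{\epsilon}:=\inf_{\overline{B(x_{0},r)}}(\phi-u_{\epsilon})$; since $\phi$ is continuous on $\overline{B(x_{0},r)}$ and the $u_{\epsilon}$ are bounded uniformly in $\epsilon$, each $m_{\epsilon}$ is finite and they are bounded below. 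Choosing $y_{\epsilon}\in\overline{B(x_{0},r)}$ with $(\phi-u_{\epsilon})(y_{\epsilon})\le m_{\epsilon}+\epsilon^{3}$ immediately yields $\phi(y_{\epsilon})-u_{\epsilon}(y_{\epsilon})\le(\phi-u_{\epsilon})(y)+\epsilon^{3}$ for every $y\in B(x_{0},r)$, which is exactly \eqref{approxmin}.

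Next I would produce a sequence $\epsilon_{n}\to0$ along which $m_{\epsilon_{n}}$ is forced to be small. Fix $n$ large. By the definition of the half-relaxed upper limit, $U_{1/n}(x_{0})=\sup S(x_{0},1/n)\ge\overline{u}(x_{0})$ (see \eqref{setu} and \eqref{tres}), so there exist $\epsilon_{n}<1/n$ and a point $z_{n}$ with $|z_{n}-x_{0}|\le1/n$ and $u_{\epsilon_{n}}(z_{n})\ge\overline{u}(x_{0})-1/n$. Using $\phi(x_{0})=\overline{u}(x_{0})$ and denoting by $\omega_{\phi}$ a modulus of continuity of $\phi$ on $\overline{B(x_{0},r)}$,
\[
m_{\epsilon_{n}}\le\phi(z_{n})-u_{\epsilon_{n}}(z_{n})\le\bigl(\phi(z_{n})-\phi(x_{0})\bigr)+\tfrac{1}{n}\le\omega_{\phi}\bigl(\tfrac{1}{n}\bigr)+\tfrac{1}{n},
\]
so $\limsup_{n}m_{\epsilon_{n}}\le0$.

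Then comes the compactness step. Passing to a subsequence (not relabelled), $m_{\epsilon_{n}}\to c\le0$ and $y_{n}:=y_{\epsilon_{n}}\to y^{*}\in\overline{B(x_{0},r)}$. For any $\delta>0$, once $n$ is large we have $\epsilon_{n}<\delta$ and $|y_{n}-y^{*}|\le\delta$, hence $u_{\epsilon_{n}}(y_{n})\in S(y^{*},\delta)$ and so $u_{\epsilon_{n}}(y_{n})\le U_{\delta}(y^{*})$; letting $n\to\infty$ and then $\delta\to0$ gives $\limsup_{n}u_{\epsilon_{n}}(y_{n})\le\overline{u}(y^{*})$. On the other hand, since $m_{\epsilon_{n}}\le(\phi-u_{\epsilon_{n}})(y_{n})\le m_{\epsilon_{n}}+\epsilon_{n}^{3}$ and $\phi(y_{n})\to\phi(y^{*})$, we get $u_{\epsilon_{n}}(y_{n})\to\phi(y^{*})-c$, whence $\phi(y^{*})-c\le\overline{u}(y^{*})$, i.e. $c\ge\phi(y^{*})-\overline{u}(y^{*})$. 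Because $\phi$ touches $\overline{u}$ from above at $x_{0}$, the quantity $\phi-\overline{u}$ vanishes at $x_{0}$ and is strictly positive on $\overline{B(x_{0},r)}\setminus\{x_{0}\}$; combined with $c\le0$ this forces $y^{*}=x_{0}$ and $c=0$.

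Finally, $c=0$ and $y^{*}=x_{0}$ give $u_{\epsilon_{n}}(y_{n})\to\phi(x_{0})=\overline{u}(x_{0})$ together with $y_{n}\to x_{0}$, while \eqref{approxmin} holds for every $n$ by construction; this is the assertion of the Proposition. I expect the only delicate point to be the bookkeeping of the two one-sided uses of the definition of $\overline{u}$ performed along the nested subsequences — the lower bound $u_{\epsilon_{n}}(z_{n})\ge\overline{u}(x_{0})-1/n$ near $x_{0}$ in the second step and the upper bound $\limsup_{n}u_{\epsilon_{n}}(y_{n})\le\overline{u}(y^{*})$ in the third — but no genuine obstruction arises; the strictness of the touching of $\phi$ is exactly what pins down $y^{*}=x_{0}$ (and hence $u_{\epsilon_{n}}(y_{n})\to\overline{u}(x_{0})$).
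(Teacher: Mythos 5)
Your argument is correct and is essentially the paper's own proof: both pick points near $x_{0}$ realizing $\overline{u}(x_{0})$ from the definition of the half-relaxed upper limit, take $\epsilon_{n}^{3}$-approximate minimizers $y_{n}$ of $\phi-u_{\epsilon_{n}}$ on $\overline{B(x_{0},r)}$, extract a convergent subsequence, and use $\limsup_{n}u_{\epsilon_{n}}(y_{n})\le\overline{u}(y^{*})$ together with the strictness of the touching to force $y^{*}=x_{0}$ and $u_{\epsilon_{n}}(y_{n})\to\overline{u}(x_{0})$. The only difference is cosmetic bookkeeping (your explicit $m_{\epsilon}$ and $c$ versus the paper's single chain of inequalities).
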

\begin{proof}
Choose sequences of numbers $\epsilon_{n}$ and points $x_{n}$ such that $\epsilon_{n}\to0$,  $x_{n}\to x_{0}$, and
$u_{\epsilon_{n}}(x_{n})\to\bar{u}(x_{0})$ as $n\to\infty$. Select a point $y_{n}\in \overline{B(x_{0},r)}$ such that
\begin{equation}\label{approxmin2}
\inf_{ y\in \overline{B(x_{0},r)}}\, \,  \phi(y)-u_{\epsilon_{n}}(y)
\ge  \phi(y_{n})-u_{\epsilon_{n}}(y_{n}))- \epsilon_{n}^{3}.
\end{equation}

Select convergent subsequences $(\epsilon_{n}, x_{n}, y_{n})\to (0, x_{0}, y_{0})$ that we relabel with the index $n$ again.
We have, using the defintion of $\bar{u}(x_{0})$, that
\[
\begin{array}{rcccl} 0 & = & \phi(x_{0})-\bar{u}(x_{0}) &=& \lim_{n\to\infty} \phi(x_{n})-u_{\epsilon_{n}}(x_{n})
\\
& & &\ge & \liminf_{n\to\infty} \phi(y_{n})-u_{\epsilon_{n}}(y_{n})- \epsilon_{n}^{3}\\
& & &\ge & \liminf_{\epsilon\to0, y\to \hat{y}}\phi(y)-u_{\epsilon}(y)- \epsilon^{3}\\
& & &=  &\phi(y_0) - \bar{u}(y_0),
\end{array}
\]
which would be positive, unless we have $y_0=x_{0}$. 
The proposition then follows from \eqref{approxmin2}.
\end{proof}
To continue proving Lemma \ref{viscosity} 
start with  $$\phi(y_{n}) -u_{\epsilon_{n}}(y_{n})   \le  \phi(x) -u_{\epsilon_{n}}(x)+ \epsilon_{n}^{3}$$
  and for $n$ large, noting that  $B(y_{n}, \epsilon_n)\subset B(x_0,r)$, integrate over the ball $B(y_{n}, \epsilon_n)$ to get 
\[
\begin{array}{rcl}
\phi(y_{n}) -u_{\epsilon_{n}}(y_{n}) &  \le &  \M^{\epsilon_{n}}_\xi\left[ \phi-u_{\epsilon_{n}} \right](y_{n})+ \epsilon_{n}^{3}\\
&=& \M^{\epsilon_{n}}_\xi\phi(y_{n}) - \M^{\epsilon_{n}}_\xi u_{\epsilon_{n}}(y_{n})+ \epsilon_{n}^{3}
\end{array}
\]
Therefore, we have
\[
\begin{array}{rcl}
\phi(y_{n}) -u_{\eps_{n}}(y_{n}) +\M^{\epsilon_{n}}_\xi u_{\eps_{n}}(y_{n}) &  \le & \M^{\epsilon_{n}}_\xi\phi(y_{n}) + \epsilon_{n}^{3},\\
\end{array}
\]
and taking supremum among all $|\xi|=1$ we get
$$
\phi(y_{n}) -u_{\epsilon_{n}}(y_{n}) +\M^{\epsilon_{n}} u_{\epsilon_{n}}(y_{n})   \le  \M^{\epsilon_{n}}\phi(y_{n})+ \epsilon_{n}^{3}
$$
from which we, using the fact that 
$u_{\epsilon_{n}}(y_{n}) =\M^{\epsilon_{n}} u_{\eps_{n}}(y_{n}),$
conclude that
\[
\begin{array}{rcl}
\phi(y_{n})  &  \le & \M^{\epsilon_{n}}\phi(y_{n})\\
&= & \phi(y_{n}) +\frac{\epsilon_n^2}{C_{N,p}}\, \D_{p}\phi(y_{n})+ o(\epsilon_{n}^{2})+ \epsilon_{n}^{3}.
\end{array}
\]
Therefore, letting $n\to\infty$,  we have   $\D_{p}\phi(x_{0})\ge 0$ and thus
  $\D_p \overline{u}(x_{0})\ge 0$ in the viscosity sense.
\par
A similar proof shows that  $\underline{u}$ is a viscosity supersolution. 
\subsubsection{Proof of Theorem \ref{main}} \label{proofofthm22} Given the boundary estimates
(\ref{upperbound}) and (\ref{lowerbound}),  we use the comparison principle for viscosity solution of $\D_{p}u = 0$ to conclude that
$$\overline{u }=\underline{u }$$
and 
$$\lim_{\epsilon\to0}u_{\epsilon}=\overline{u}=\underline{u}=u$$ uniformly in $\overline{\Omega}$,
where $u$ is  the unique solution to the Dirichlet problem (\ref{introdirichlet}).

\textbf{Acknowledgements:} Supported by the Norwegian Research Council (grant 250070), the Academy of Finland (grant SA13316965), and Aalto University. We thank Eero Ruosteenoja for pointing out a flaw in
our original manuscript.

 \newpage

\bibliographystyle{alpha}
\bibliography{ManfrediOct2017}

\begin{thebibliography}{PSSW09}

\bibitem[Bar94]{B94}
Guy Barles.
\newblock {\em Solutions de viscosit\'{e} des \'{e}quations de
  {H}amilton-{J}acobi}, volume~17 of {\em Math\'{e}matiques \& Applications
  (Berlin) [Mathematics \& Applications]}.
\newblock Springer-Verlag, Paris, 1994.

\bibitem[Bru18a]{B18-2}
Karl~K. Brustad.
\newblock Sublinear elliptic operators.
\newblock {\em Preprint}, 2018.

\bibitem[Bru18b]{B18}
Karl~K. Brustad.
\newblock Superposition of \(p\)-superharmonic functions.
\newblock {\em Advances in Calculus of Variations}, 2018.

\bibitem[BS91]{BS91}
Guy Barles and Panagiotis~E. Souganidis.
\newblock Convergence of approximation schemes for fully nonlinear second order
  equations.
\newblock {\em Asymptotic Anal.}, 4(3):271--283, 1991.

\bibitem[CC95]{CC95}
Luis~A. Caffarelli and Xavier Cabr{\'e}.
\newblock {\em Fully nonlinear elliptic equations}, volume~43 of {\em American
  Mathematical Society Colloquium Publications}.
\newblock American Mathematical Society, Providence, RI, 1995.

\bibitem[CZ03]{CZ03}
Michael~G. Crandall and Jianying Zhang.
\newblock Another way to say harmonic.
\newblock {\em Trans. Amer. Math. Soc.}, 355(1):241--263, 2003.

\bibitem[KS09]{KS09}
Robert~V. Kohn and Sylvia Serfaty.
\newblock Second-order {PDE}'s and deterministic games.
\newblock In {\em I{CIAM} 07---6th {I}nternational {C}ongress on {I}ndustrial
  and {A}pplied {M}athematics}, pages 239--249. Eur. Math. Soc., Z{\"u}rich,
  2009.

\bibitem[KS10]{KS10}
Robert~V. Kohn and Sylvia Serfaty.
\newblock A deterministic-control-based approach to fully nonlinear parabolic
  and elliptic equations.
\newblock {\em Comm. Pure Appl. Math.}, 63(10):1298--1350, 2010.

\bibitem[LM08]{LM08}
Peter Lindqvist and Juan~J. Manfredi.
\newblock Note on a remarkable superposition for a nonlinear equation.
\newblock {\em Proc. Amer. Math. Soc.}, 136(1):133--140, 2008.

\bibitem[MOF16]{MOF16}
{D}iscontinuity of solutions to approximation schemes in the
  {B}arles-{S}ouganidis framework.
\newblock {\em Mathoverflow}, 2016.

\bibitem[MPR12]{MPR12}
Juan~J. Manfredi, Mikko Parviainen, and Julio~D. Rossi.
\newblock On the definition and properties of {$p$}-harmonious functions.
\newblock {\em Ann. Sc. Norm. Super. Pisa Cl. Sci. (5)}, 11(2):215--241, 2012.

\bibitem[Obe07]{O07}
Adam~M. Oberman.
\newblock The convex envelope is the solution of a nonlinear obstacle problem.
\newblock {\em Proc. Amer. Math. Soc.}, 135:1689--1694, 2007.

\bibitem[OS11]{OS11}
Adam~M. Oberman and Luis Silvestre.
\newblock The {D}irichlet problem for the convex envelope.
\newblock {\em Trans. Amer. Math. Soc.}, 363(11):5871--5886, 2011.

\bibitem[PS08]{PS08}
Yuval Peres and Scott Sheffield.
\newblock Tug-of-war with noise: a game-theoretic view of the
  {$p$}-{L}aplacian.
\newblock {\em Duke Math. J.}, 145(1):91--120, 2008.

\bibitem[PSSW09]{PSSW09}
Yuval Peres, Oded Schramm, Scott Sheffield, and David~B. Wilson.
\newblock Tug-of-war and the infinity {L}aplacian.
\newblock {\em J. Amer. Math. Soc.}, 22(1):167--210, 2009.

\bibitem[Sri98]{S98}
S.~M. Srivastava.
\newblock {\em A course on {B}orel sets}, volume 180 of {\em Graduate Texts in
  Mathematics}.
\newblock Springer-Verlag, New York, 1998.

\end{thebibliography}

\end{document}